\DeclareMathAlphabet{\mathpzc}{OT1}{pzc}{m}{it}
\newtheorem{propo}{Proposition}[section]
\newtheorem{lemma}[propo]{Lemma}
\newtheorem{definition}[propo]{Definition}
\newtheorem{thm}[propo]{Theorem}
\newtheorem{prop}[propo]{Proposition}
\def\oS{S^{\rm c}}
\newcommand{\reals}{{\mathds R}}
\newcommand{\integers}{{\mathds Z}}
\newcommand{\naturals}{{\mathds N}}
\newcommand{\eqnsection}{\renewcommand{\theequation}{\thesection.\arabic{equation}}
      \makeatletter \csname @addtoreset\endcsname{equation}{section}\makeatother}
\def\eps{\epsilon}
\def\l|{\left|\left|}
\def\r|{\right|\right|}
\def\E{\mathds E}
\def\1{\mathds 1}
\def\prob{{\mathds P}}
\def\ind{{\mathds I}}
\def\ve{\varepsilon}
\def\de{{\rm d}}
\def\reals{{\mathds R}}
\def\ux{\underline{x}}
\def\M{{\mathpzc M}}
\def\Tree{{\sf T}}
\def\Ball{{\sf B}}
\def\Ell{{\sf E}}
\def\cEll{{\sf E}^{\rm c}}
\def\cBall{{\sf B}^{\rm c}}
\def\sTV{\mbox{\tiny\rm TV}}
\def\G{\mathcal{G}}
\def\H{\mathcal{H}}
\def\M{\mathcal{M}}
\def\cB{\mathcal{B}}
\def\mm{\mathfrak{m}}
\def\normeq{\cong}
\def\cX{{\cal X}}
\def\F{{\sf F}}
\def\hmu{\hat{\mu}}
\def\root{\o}
\def\cTree{{\sf T^c}}
\def\limMes{{\bar \nu}}
\def\bnu{\bar{\nu}}
\def\Var{{\rm Var}}
\def\Cov{{\rm Cov}}
\def\hf{\hat{f}}
\def\barf{\bar{f}}
\begin{document}
\title{The weak limit of Ising models on locally tree-like graphs}

\author{Andrea Montanari\thanks{Department of Electrical Engineering and Department of Statistics, Stanford University},
\;\;\;\; Elchanan Mossel\thanks{Faculty of Mathematics and Computer Science, Weizmann Institute and Departments of Statistics and Computer Science, UC Berkeley}\;
\;\;and \;\;
Allan Sly\thanks{Microsoft Research, Redmond, WA}
}

\date{\today}

\maketitle

\begin{abstract}
We consider the Ising model with inverse temperature $\beta$ and without external field on sequences of graphs $G_n$
which converge locally to the $k$-regular tree. We show that for such graphs the Ising measure locally weak converges to the symmetric
mixture of the Ising model with $+$ boundary conditions and the $-$
boundary conditions on the $k$-regular tree with inverse temperature $\beta$.
In the case where the graphs $G_n$ are expanders we derive a more detailed understanding by showing convergence of the Ising measure
condition on positive magnetization (sum of spins) to the $+$ measure on the tree.
\end{abstract}

\section{Introduction}

An \emph{Ising model on the
finite graph $G$} (with vertex set $V$,
and edge set $E$) is defined by the following distribution over
$\ux = \{x_i:\, i\in V\}$, with
$x_i\in\{+1,-1\}$
\begin{eqnarray}
\mu(\ux) =\frac{1}{Z(\beta,B)}\, \exp\Big\{\beta\sum_{(i,j)\in E}x_ix_j
+B\sum_{i\in V}x_i\Big\}\, .\label{eq:IsingModel}
\end{eqnarray}
The model is \emph{ferromagnetic} if $\beta\ge 0$ and, by symmetry,
we can always assume $B\ge 0$.
Here $Z(\beta,B)$ is a normalizing constant (partition function).

The most important feature of the distribution $\mu(\,\cdot\,)$
is the `phase transition' phenomenon. On a variety of large
graphs $G$, for large enough $\beta$ and $B=0$, the measure
decomposes into the convex combination of two well separated
simpler components.
This phenomenon has been studied in detail in the case of grids
\cite{Grids1,Grids2,Grids3,Grids4},
and on the complete graph \cite{NewmanEllis}.
In this paper we consider sequences of regular graphs $G_n = (V_n,E_n)$ with
increasing vertex sets $V_n = [n] = \{1,\dots,n\}$ that converge locally
to trees and prove a local characterization of the corresponding
sequence of measures $\mu_{n}(\,\cdot\,)$, which corresponds to the
phase transition phenomenon.

More precisely,
consider the case in which $G_n$ is a sequence of regular graphs of degree
$k\ge 3$ with diverging girth. The neighborhood of $\Ball_i$
any vertex $i$ in $G_n$ converges to an infinite regular tree of degree $k$.
It is natural to assume that the marginal distribution
$\mu_{n,\Ball_i}(\,\cdot\,)$ converges to the marginal of a neighborhood
of the root for an Ising Gibbs measure on the infinite tree.
For large $\beta$, however, there are uncountably many Gibbs measures on the tree so it is natural to ask which is the limit

A special role is played by the plus/minus boundary conditions
Gibbs measures on the infinite tree, to be denoted,
respectively, by $\nu^+(\,\cdot\,)$ and $\nu^-(\,\cdot\,)$.
It was proved in \cite{DemboMontanariIsing} that, for any
$\beta$, and any $B>0$, $\mu_{n}(\,\cdot\, )$
converges locally to $\nu^+$ as $n\to\infty$ and by symmetry when $B<0$
$\mu_{n}(\,\cdot\, )$
converges locally to $\nu^-$ as $n\to\infty$.

In this paper we cover the remaining (and most interesting) case proving that
\begin{eqnarray}
\mu_{n}(\,\cdot\,)\;\;\;\underset{n}{\longrightarrow}
\;\;\;\frac{1}{2}\,\nu^{+}(\,\cdot\,)\;+\;
\frac{1}{2}\,\nu^{-}(\,\cdot\,)\;\;\; \mbox{ for $B=0$ and
any $\beta\ge 0$\, .} \label{eq:Result1}
\end{eqnarray}
In fact, we prove a sharper result. If $\mu_{n,+}(\,\cdot\,)$
and $\mu_{n,-}(\,\cdot\,)$ denote the Ising measure
(\ref{eq:IsingModel}) conditioned to, respectively,
$\sum_{i\in V} x_i>0$ and $\sum_{i\in V} x_i<0$, then we have
\begin{eqnarray}
\mu_{n,\pm}(\,\cdot\,)\;\;\;\underset{n}{\longrightarrow}
\nu_{\pm}(\,\cdot\,)\;\;\; \mbox{ for $B=0$ and
any $\beta\ge 0$\,,}
\end{eqnarray}
and moreover the convergence above holds for almost all vertices of the graph.
Since $\mu_{n} = \frac{1}{2}\mu_{+,n}+\frac{1}{2}\mu_{-,n}$
(exactly for $n$ odd and approximately for even $n$),
this result implies (\ref{eq:Result1}).
%
%
\section{Definitions and main results}
\label{sec:DefinitionMain}

%
%
\subsection{Locally tree-like graphs}
\label{sec:Local}

We denote by $G_n=(V_n,E_n)$ a graph with vertex set
$V_n\equiv[n]=\{1,\dots,n\}$. The distance $d(i,j)$ between
$i,j\in V_n$ is the length of the shortest path from $i$ to $j$ in
$G_n$. Given a vertex $i\in V_n$, we denote by
$\Ball_i(t)$  the set of vertices whose distance from $i$ is at most
$t$ (and with a slight abuse of notation it will also denote the
subgraph induced by those vertices).
We will let $I$ denote a vertex chosen uniformly from the vertices $V_n$, let $U_n$ denote the measure induced by $I$ and let $J$ denote a uniformly
random neighbor of $I$.

This paper is concerned by sequence of graphs $\{G_n\}_{n\in\naturals}$
of diverging size, that converge locally to $\Tree_k$, the infinite rooted tree of degree $k$.
Let $\Tree_k(t)$ be the subset of  vertices of $\Tree_k$ whose distance
from the root $\root$ is at most $t$ (and, by an abuse of notation, the
induced subgraph). For a rooted  tree $T$, we write $T \simeq \Tree_k(t)$ if there is a graph isomorphism between
$T$ and $\Tree_k(t)$ which maps the root of $T$ to that of $\Tree_k(t)$.  The following definition defines what we mean by convergence in the local weak topology.
\begin{definition}
Consider a sequence of graphs $\{G_n\}_{n\in\naturals}$,  and let
$U_n$ be the law of a uniformly random vertex $I$ in
$V_n$.
We say that $\{G_n\}$ \emph{converges locally} to the
degree-$k$ regular tree $\Tree_k$ if, for any $t$,
\begin{eqnarray}
\lim_{n\to\infty}U_n\{\Ball_I(t) \simeq \Tree_k(t)\} = 1\, .
\end{eqnarray}
\end{definition}

Part of our results hold for sequences of expanders
(more precisely, edge expanders), whose definition
we now recall. For a subset of vertices $S\subset V$,
we will denote by $\partial S$ the subset of edges $(i,j)\in E$
having only one endpoint in $S$.
\begin{definition}
The $k$-regular graph $G=(V,E)$ is a $(\gamma,\lambda)$ (edge) expander
if, for any set of vertices $S\subseteq V$ with $|S|\le n\gamma$,
$|\partial S| \ge \lambda S$.
\end{definition}
%

%
%
\subsection{Local weak convergence}

In analogy with the definition of locally tree-like
graph sequences, we introduce local weak convergence for Ising measures.  This done in two different ways.  First one can look at a random vertex and the random configuration in the neighbourhood of the vertex and examine its limiting measure.  Alternatively, we may choose a random vertex and
consider the marginal distribution of the variables in
a neighborhood under the Ising model. This induces (via the random choice of
the vertex) a distribution over probability measures. We can
therefore ask whether this measure converges to a probability
measure over Gibbs measures.

Recall that an Ising measure $\mu$ on the infinite tree $\Tree_k$ may be either defined as a weak limit
of Gibbs measures on $\Tree_k(t)$ or in terms of the DLR conditions,
see e.g.~\cite{georgii88}.
An Ising model is in particular a probability measure over
$\{-1,+1\}^{\Tree_k}$ endowed with the $\sigma$-algebra generated by
cylindrical sets. We let $\G_k$ denote the space of Ising Gibbs measures on
$\Tree_k$ and let $\H_k$ denote the space of all probability
measures on $\{+1,-1\}^{\Tree_k}$.
We endow both these spaces with the topology of weak convergence.
Since $\{+1,-1\}^{\Tree_k}$ is compact, $\G_k$ and $\H_k$
are also compact in the weak topology by Prohorov's theorem.

We define $\M_k$ (respectively $\M^{\G}_k$) to be the space
of probability measures over $(\H_k,\cB_{\H})$ (resp. $(\G_k,\cB_{\H})$),
with $\cB_{\Omega}$ the Borel $\sigma$-algebra.
Also $\M_k$, $\M^{\G}_k$ are compact in the weak topology.

We will use generically $\mu$ for Ising measures on $G_n$
and $\nu$ for Ising measure on $\Tree_k$.
For a finite subset of
vertices $S\subseteq V_n$, we let $\mu^S$ be the marginal of
$\mu$ on the variables
$x_j$,  $j\in S$.
We the shorthand $\mu^t$ for when $S = \Ball_i(t)$ is
the ball of radius $t$ about $i$ ($i$ should be clear from the context).
For a measure $\nu\in \G_k$ we let $\nu^t$ denote
its marginal over the variables $x_j$,  $j\in\Tree_k(t)$.
In other words $\nu^t$ is the projection of $\nu$ on $\{+1,-1\}^{\Tree_k(t)}$.
For
a measure $\mm\in \M_k$ we let $\mm^t$
denote the measure on the space of measures on
$\{+1,-1\}^{\Tree_k(t)}$ induced by such projection.
\begin{definition}
Consider a sequence of graphs/Ising measures
pairs  $\{(G_n,\mu_n)\}_{n\in\naturals}$ and let
$\prob_n^t(i)$ denote the law of the pair
$(\Ball_i(t),\ux_{\Ball_i(t)})$ when $\ux$ is drawn with distribution $\mu_n$
and $i\in [n]$ is vertex in the graph. Let $U_n$ denote the uniform measure over a random vertex $I \in [n]$.
Let $\prob_n^t = \E_{U_n}(\prob_n^t(I))$ denote the average of $\prob_n^t(I)$.
\begin{enumerate}
\item[A.] The first mode of convergence concerns picking a random vertex $I$ and a random local configuration in the neighbourhood of $I$.  Formally, for $\limMes\in \mathcal{G}_k$ we say that $\{\mu_n\}_{n\in\naturals}$
{\em converges locally on average to} $\limMes$
if for any $t$ and any $\eps > 0$ it holds that
\begin{eqnarray}
\lim_{n\to\infty} d_{\sTV}\left(\prob_n^t, \delta_{\Tree_k(t)} \times \limMes^t \right) = 0.
\end{eqnarray}
\item[B.] A stronger form of convergence involves picking a random vertex $I$ and the associated random local measure $\prob_n^t(I)$ and asking if this distribution of distributions converges. Formally, we say that {\em the local distributions of} $\{\mu_n\}_{n\in\naturals}$
{\em converge locally to} $\mathfrak{m}\in\mathcal{M}_k^{\mathcal{G}}$ if it holds that the law of
$\prob_n^t(I)$ converges weakly to $\delta_{\Tree_k(t)} \times \mathfrak{m}^t$ for all $t$.
\item[C.] If $\mathfrak{m}$ is a point mass on $\limMes\in \mathcal{G}_k$ and if the local distributions of $\{\mu_n\}_{n\in\naturals}$
converge locally to $\mathfrak{m}$
then we say that $\{\mu_n\}_{n\in\naturals}$
{\em converges in probability locally to} $\limMes$.  Equivalently convergence in probability locally to $\limMes$ says that for any $t$ and any $\eps > 0$ it holds that
\begin{eqnarray}
\lim_{n\to\infty} U_n \left(d_{\sTV}(\prob_n^t(I), \delta_{\Tree_k(t)} \times \nu^t \right) > \eps) = 0.
\end{eqnarray}
\end{enumerate}
It is easy to verify that $C \Rightarrow B \Rightarrow A$.
\end{definition}

Similar notions of the convergence has been studied before under the name metastates for Gibbs measures.  Aizenman and Wehr \cite{AizWeh:89}, while investigating the quenched behaviour of lattice random field models, introduced the notion of a metastate which is a probability measures over Gibbs measures as a function of the disorder (the random field).  Here, rather than taking a finite graph and choosing a random vertex they take a fixed random environment in
$\integers^d$, and study the measure over increasing finite volumes.  Rather than prove convergence (which depending on the model may not hold) they take subsequential limits and study the properties of these limiting distributions of Gibbs measures (metastates).  Another, similar notion of convergence to metastates was developed by Newman and Stein \cite{newman:96} where they took the empirical measure over Gibbs measures at over increasing volumes to study spin-glasses.  More references and discussions can be found in \cite{kulske:96}.

In order to state our main result formally, we recall
that an Ising measure on $\Tree_k$ is Gibbs if, for any
integer $t\ge 0$
\begin{eqnarray}
\mu^{\Tree_k(t)|\cTree_k(t)}(\ux_{\Tree_k(t)}|\ux_{\cTree_k(t)})
= \frac{1}{Z_{t,\ux}(\beta)}\exp\left\{
\beta\sum_{(i,j)\in E(\Tree_k(t+1))}x_ix_j\right\}\, ,
\end{eqnarray}
where $Z_{t}(\beta)$ is a normalization function that depends
on the conditioning, namely on $\ux_{\Tree_{k}(t+1)\setminus \Tree_k(t)}$.

It is well known that if $(k-1)\tanh\beta\le 1$, there exist only one
Gibbs measure on a $k$-regular tree while for $(k-1)\tanh\beta>1$
the Gibbs measures form a non-trivial convex set (see e.g.~\cite{georgii88}). Two
of its extreme points, $\nu_{+}$ and $\nu_{-}$ play a special role in
the following. The `plus-boundary conditions' measure $\nu_+$ is
defined as the monotone decreasing limit
(with respect to the natural partial ordering on the
space of configurations $\{+1,-1\}^{\Tree_k}$)  of $\nu_{+}^t$ as $t\to\infty$,
where $\nu_{+}^t$ is the measure on $\ux_{\Tree_k(t)}$ defined by
\begin{eqnarray}
\nu_{+}^t(\ux_{\Tree_k(t)})
= \frac{1}{Z_{+,t}(\beta)}\exp\left\{
\beta\sum_{(i,j)\in E(\Tree_k(t))}x_ix_j\right\}
\prod_{i\in\Tree_{k}(t)\setminus \Tree_{k}(t-1)}\ind(x_i=+1)\, .
\end{eqnarray}
The measure $\nu_-$ is defined analogously, by forcing spins on the boundary
to take value $-1$ instead of $+1$. The two measures are obviously
related through spin reversal. Further it well known
(and easy to prove) that for any Gibbs measure $\nu$ we have
$\nu_-\preceq \nu\preceq \nu_+$
(with $\preceq$ the stochastic ordering induced by the partial ordering
on $\{+1,-1\}$ configurations, see e.g.~\cite{Liggett}).
Our main result may be now stated as follows
\begin{thm}\label{thm:Main}
Let $\{G_n\}_{n\in\naturals}$ be a sequence of $k$-regular graphs that
converge locally to the tree
$\Tree_k$. For $(k-1)\tanh\beta>1$,
 define the sequence $\{\mu_n\}_{n\in\naturals}$,
$\{\mu_{n,+}\}_{n\in \naturals}$ by
\begin{eqnarray}
\mu_{n,+}(\ux) & = & \frac{1}{Z_{n,+}(\beta)}\, \exp\Big\{\beta\sum_{(i,j)\in E_n}x_ix_j
\Big\}\;\ind\Big\{\sum_{i\in V_n}x_i>0\Big\}\, ,\label{eq:IsingModelNoField}\\
\mu_{n}(\ux) &=&\frac{1}{Z_{n}(\beta)}\, \exp\Big\{\beta\sum_{(i,j)\in E_n}x_ix_j
\Big\}.
\end{eqnarray}
Then
\begin{enumerate}
\item[I.]
$\mu_n$ converges locally in probability to $\frac12(\nu_+ + \nu_-)$
\item[II.] If the graphs $\{G_n\}$
are $(1/2,\lambda)$ edge expanders for some $\lambda>0$,
then  $\mu_{n,+}$ converges locally in probability
to the plus-boundary Gibbs measure on the infinite tree
$\nu_+$.
\end{enumerate}
\end{thm}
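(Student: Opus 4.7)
The argument rests on three pillars: the Dembo--Montanari theorem that $\mu_n$ with any positive external field $B>0$ converges locally in probability to $\nu_+$; the spin-flip symmetry $\mu_n(\ux)=\mu_n(-\ux)$ together with the standard stochastic monotonicity $\mu_n^{-B}\preceq\mu_n\preceq\mu_n^{+B}$ in $B$; and edge-expansion controlling the magnetization (only for Part II). The central difficulty is that Dembo--Montanari takes $n\to\infty$ before $B\to 0$, whereas we want the $B=0$ measure directly, so some exchange of limits is required.

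\textbf{Plan for Part II.} Let $M_n=\sum_{i\in V_n}x_i$ and $m^{\star}:=\E_{\nu_+}[x_{\root}]>0$. The key exact identity is
\begin{equation*}
\mu_{n,+}(A\mid M_n=m)\;=\;\mu_n(A\mid M_n=m)\;=\;\mu_n^{B}(A\mid M_n=m)\qquad(m>0,\;B\in\R),
\end{equation*}
where $\mu_n^B$ denotes the Ising measure on $G_n$ with field $B$; the three measures differ only by factors depending on $\ux$ through $M_n(\ux)$ alone. I would then proceed in three steps. First, use the $(1/2,\lambda)$-expander hypothesis to prove a Peierls-type estimate showing that under $\mu_n$ the magnetization concentrates on the bimodal set $|M_n|/n\approx m^{\star}$: any configuration with $|M_n|/n$ bounded away from $m^{\star}$ contains a $(+)/(-)$ interface whose edge-boundary is lower-bounded by expansion, producing an exponential energy penalty that dominates entropy. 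Second, apply the $\pm$-symmetry of $\mu_n$ to conclude that $\mu_{n,+}$ concentrates on the single slice $M_n/n\approx+m^{\star}$, and repeat the first step for $\mu_n^B$ with small $B>0$, noting that $m^{\star}(B)\to m^{\star}$ as $B\downarrow 0$. Third, for any local event $A$ on $\Ball_I(t)$, chain
\begin{equation*}
\mu_{n,+}(A)\;\approx\;\mu_{n,+}(A\mid M_n\approx m^{\star}n)\;=\;\mu_n^{B}(A\mid M_n\approx m^{\star}n)\;\approx\;\mu_n^{B}(A)\;\longrightarrow\;\nu_+^t(A),
\end{equation*}
where the outer approximations use the concentration from steps one and two and the equality is the identity above. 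Upgrading expectation convergence to convergence in probability at a uniformly random vertex $I$ requires a second-moment estimate over pairs $(I,J)$, which the tree-like structure of typical neighborhoods supplies.

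\textbf{Plan for Part I.} Without expansion, the magnetization need not concentrate sharply, so I would instead combine the stochastic sandwich with a boundary-revelation argument. For $I$ with $\Ball_I(r)\simeq\Tree_k(r)$ (a typical event), the conditional law of $\ux_{\Ball_I(t)}$ given $\ux_{\partial\Ball_I(r)}=\tau$ under $\mu_n$ is \emph{exactly} the Ising measure on the finite tree $\Tree_k(r)$ with boundary $\tau$, which depends on $\tau$ only through cavity biases at depth $r-t$. Taking $r\to\infty$, the sandwich $\mu_n^{-B}\preceq\mu_n\preceq\mu_n^{+B}$ together with Dembo--Montanari applied to $\mu_n^{\pm B}$ (as $B\downarrow 0$) forces the $\mu_n$-distribution of these biases to concentrate in the limit on the two stable fixed points $h_\pm$ of the Ising tree recursion. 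Spin-flip symmetry of $\mu_n$ then gives equal weights $\tfrac12$ to each atom, yielding the mixture $\tfrac12(\nu_+^t+\nu_-^t)$.

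\textbf{Main obstacle.} The hardest ingredient is the magnetization concentration in Part II. Unlike on a lattice where classical Peierls contour arguments apply, on a general expander one must carefully bound the probabilistic cost of intermediate magnetizations via an isoperimetric Peierls estimate exploiting $(1/2,\lambda)$-expansion, and then show the resulting surface-tension penalty dominates entropy uniformly in $n$. A secondary obstacle in Part I is ensuring that the cavity biases revealed by the boundary land on $\{h_+,h_-\}$ rather than on some exotic extremal Gibbs measure of $\Tree_k$ (a real concern at low temperature); this is resolved by the sandwich, which already forces any subsequential limit into the convex hull of $\{\nu_+^t,\nu_-^t\}$, combined with spin-flip symmetry to fix the weights at $\tfrac12$ each.
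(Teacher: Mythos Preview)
Your plan for Part I contains a genuine gap. The stochastic sandwich $\mu_n^{-B}\preceq\mu_n\preceq\mu_n^{+B}$, combined with Dembo--Montanari and $B\downarrow 0$, yields only $\nu_-\preceq\bar\nu\preceq\nu_+$ for any subsequential local limit $\bar\nu$. But \emph{every} Ising Gibbs measure on $\Tree_k$ is sandwiched between $\nu_-$ and $\nu_+$, so this is no constraint at all beyond Gibbsianity. In particular the free-boundary measure $\nu_f$ satisfies $\nu_-\preceq\nu_f\preceq\nu_+$ yet, when $(k-1)\tanh\beta>1$, is \emph{not} a convex combination of $\nu_\pm$. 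Your assertion that the sandwich ``already forces any subsequential limit into the convex hull of $\{\nu_+^t,\nu_-^t\}$'' is therefore false, and nothing in your sketch excludes $\nu_f$ or the uncountably many other extremal Gibbs measures on $\Tree_k$. The cavity-bias picture does not help either: even if each boundary bias lies in $[-h_+,h_+]$, a nontrivial profile across boundary vertices produces measures outside the linear span of $\nu_\pm$.

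The paper supplies precisely the missing idea: instead of sandwiching monotone observables, it pins down the \emph{edge-agreement} (energy) density. From the Dembo--Montanari free-energy asymptotics and convexity in $\beta$ one obtains $\lim_n\E_{U_n}\mu_n(x_Ix_J)=\nu_+(x_{\root}x_1)$. Separately, a short tree computation shows that among all Gibbs measures $\nu$ on $\Tree_k$ the quantity $\nu(x_{\root}x_1)$ is \emph{uniquely} maximized by convex combinations of $\nu_+$ and $\nu_-$. Together these force any subsequential limit into $\{(1-q)\nu_++q\nu_-:q\in[0,1]\}$, after which spin-flip symmetry gives $q=1/2$. This variational characterization of $\nu_\pm$ by the nearest-neighbor correlation is the crux, and your plan has no analogue of it.

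For Part II your route is genuinely different from the paper's and not obviously unworkable, but the step you yourself flag as hardest---a Peierls/isoperimetric estimate on a general $(1/2,\lambda)$-expander giving bimodal concentration of $M_n/n$---is left entirely open, and controlling the energy--entropy balance there already seems to require the same free-energy input as above. The paper bypasses global magnetization concentration: it first shows (via a local-CLT bound) that the conditioning $\{M_n>0\}$ is asymptotically invisible on any fixed ball, so the limit $\bar\nu_+$ is Gibbs; the energy argument then gives $\bar\nu_+=(1-q)\nu_++q\nu_-$; and expansion is applied only to the \emph{local} phase indicators $\F_i(\ell)=\ind\{\text{the }\ell\text{-ball around }i\text{ has negative magnetization}\}$ to show that macroscopic coexistence of plus- and minus-looking neighborhoods would create a large edge-boundary, contradicting $\bar\nu_+(\F_{\root}(\ell)\neq\F_1(\ell))\to 0$.
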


This characterization has a number of useful consequences.
In particular, `spatial' averages of local functions are
roughly constant under the conditional measure $\mu_{n,+}$.
To be more precise,
for each $i\in V_n$, let
\[
f_{i,n}:\{+1,-1\}^{\Ball_i(\ell)}\to [-1,1],
\]
be a function of its neighborhood $\Ball_i(\ell)$.
\begin{thm}\label{thm:Consequence}
Let $\{G_n\}_{n\in\naturals}$ be a sequence of $k$-regular
$(1/2,\lambda)$ edge expanders, for some $\lambda>0$, that
converge locally to the tree
$\Tree_k$.
For each $n$, let $\{f_{i,n}\}_{i=1}^n$ be a
collection of local functions as above. Then, for any
$\ve>0$
\begin{eqnarray}
\lim_{n\to \infty}\mu_{n,+}\Big\{\Big|
\frac{1}{n}\sum_{i\in V_n}[f_{i,n}(\ux_{\Ball_i(\ell)})- \mu_{n,+}\left(\frac{1}{n}\sum_{i\in V_n}
f_{i,n}(\ux_{\Ball_i(\ell)})) \right)]\Big|\ge \ve \Big\} = 0\, .
\end{eqnarray}
\end{thm}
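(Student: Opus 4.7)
The plan is to reduce Theorem~\ref{thm:Consequence} to a variance estimate and then control the variance using the local convergence in probability from Theorem~\ref{thm:Main}(II). Writing
\[
g_n(\ux) \;=\; \frac{1}{n}\sum_{i\in V_n} f_{i,n}(\ux_{\Ball_i(\ell)}),
\]
Chebyshev's inequality reduces the claim to $\Var_{\mu_{n,+}}(g_n)\to 0$ as $n\to\infty$.

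Expanding the variance,
\[
\Var_{\mu_{n,+}}(g_n) \;=\; \frac{1}{n^2}\sum_{i,j\in V_n}\Cov_{\mu_{n,+}}\bigl(f_{i,n}(\ux_{\Ball_i(\ell)}),\,f_{j,n}(\ux_{\Ball_j(\ell)})\bigr),
\]
each summand bounded in absolute value by $2$. Fix $R>2\ell$. Since $G_n$ has degree $k$, the at most $nk^R$ pairs with $d(i,j)\le R$ contribute $O(k^R/n)=o(1)$ for fixed $R$. The substantive task is to control the far-pair contribution, where the balls $\Ball_i(\ell)$ and $\Ball_j(\ell)$ are disjoint. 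For these pairs my plan is to upgrade Theorem~\ref{thm:Main}(II) to a two-vertex statement: for two independent uniform vertices $I,J\in V_n$, the $(U_n\otimes U_n)$-probability that the joint marginal of $\mu_{n,+}$ on $\ux_{\Ball_I(\ell)\cup\Ball_J(\ell)}$ is $\ve$-close in total variation to the product $\nu_+^{\Tree_k(\ell)}\otimes\nu_+^{\Tree_k(\ell)}$ tends to $1$. Granted this, for all but an $o(1)$ fraction of far pairs one has $|\Cov_{\mu_{n,+}}(f_{i,n},f_{j,n})|=O(\ve)$, while the exceptional far pairs contribute at most $2\cdot o(1)$. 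Hence the variance is at most $O(\ve)+o(1)$, and letting $\ve\downarrow 0$ after $n\to\infty$ finishes the argument.

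The main obstacle is proving the two-vertex enhancement of Theorem~\ref{thm:Main}(II). My approach is to condition $\mu_{n,+}$ on $\ux_{\Ball_I(\ell)}=s$ and apply Theorem~\ref{thm:Main}(II) to the resulting perturbed measure around $J$. The conditional measure is not itself an unmodified Ising model on $G_n$, but only $O(1)$ spins are pinned; by FKG monotonicity it is sandwiched between Ising measures on $G_n$ with $O(1)$ extra boundary field, and the edge-expansion hypothesis prevents such a bounded-scale perturbation from altering the global phase selection that drives Theorem~\ref{thm:Main}(II). The delicate point is to extract enough uniformity from that convergence under this pinning to deduce joint factorization; once this is established, the variance bound above closes the proof.
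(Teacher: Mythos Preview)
Your overall skeleton matches the paper: reduce to $\Var_{\mu_{n,+}}(g_n)\to 0$ via Chebyshev, expand as an average of covariances over pairs, discard near pairs by a degree count. The divergence is entirely in how the far-pair covariances are handled.

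The paper does \emph{not} attempt a two-vertex enhancement of Theorem~\ref{thm:Main}(II). Instead, for $d(i,j)>2r$ it writes
\[
\Cov_{n,+}(f_i,f_j)=\Cov_{n,+}\bigl(\hat f_i^+(r),f_j\bigr)\le \sqrt{\Var_{n,+}\bigl(\hat f_i^+(r)\bigr)},
\]
where $\hat f_i^+(r)=\E_{n,+}[f_i\mid \ux_{V_n\setminus\Ball_i(r)}]$, and then shows $\E_{U_n}\Var_{n,+}(\hat f_I^+(r))\to 0$ by: (a) replacing $\hat f_i^+$ by the conditional expectation $\hat f_i$ taken under the unconditioned $\mu_n$, the discrepancy being controlled by the local CLT of Lemma~\ref{lemma:CLT}; (b) using the \emph{one}-vertex Theorem~\ref{thm:Main}(II) to transfer $\Var_{n,+}(\hat f_I(r))$ to the tree quantity $\Var_{\nu_+}(\bar f(r))$ with $\bar f(r)=\E_{\nu_+}[f\mid \ux_{\Tree_k\setminus\Tree_k(r)}]$; and (c) invoking extremality of $\nu_+$, whose tail $\sigma$-field is trivial, so $\bar f(r)$ converges a.s.\ to a constant and $\Var_{\nu_+}(\bar f(r))\to 0$ as $r\to\infty$ (uniformly in $f$ by a compactness argument).

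Your route has a genuine gap at the two-vertex step. Conditioning $\mu_{n,+}$ on $\ux_{\Ball_I(\ell)}=s$ does not produce a measure to which Theorem~\ref{thm:Main}(II) applies: it is neither $\mu_{m,+}$ on a subgraph nor an unconstrained Ising model with $O(1)$ extra fields; the global event $\{\sum_i x_i>0\}$ is still present and interacts with the pinning. The FKG sandwich you propose does not respect this constraint in a way that lets you invoke the theorem, and in any case the pinned configuration $s$ may well contain many $-1$'s. Even granting some monotone comparison, Theorem~\ref{thm:Main}(II) is proved for a fixed graph sequence and carries no uniformity over $O(1)$ boundary perturbations or over the location $I$; extracting the uniformity you flag as ``delicate'' would essentially require re-running the entire proof of the main theorem under pinning. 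The paper's argument avoids all of this: one-vertex convergence plus extremality of $\nu_+$ already encodes the tail triviality that forces far-pair decorrelation, so no separate two-point input is needed.
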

The proof can be found in Section \ref{sec:Consequence}.
%
%
\subsection{Examples and remarks}

Notice that, for $(k-1)\tanh\beta\le 1$, the set of Ising
Gibbs measures on $\Tree_k$ contains a unique element,
that can be obtained as limit of free boundary measures.
Therefore, the local limits of $\{\mu_n\}_{n\in\naturals}$,
$\{\mu_{n,+}\}_{n\in\naturals}$ coincide trivially
with this unique Gibbs measure.

Therefore, the claim $I$ is proved under the weakest possible, hypothesis,
namely local convergence of the graphs to $\Tree_k$.
An important class of graphs for which Theorem~\ref{thm:Main}
is applicable are random $k$-regular graphs.
These are known to converge locally to $\Tree_k$ \cite{Wormald99}.

The expansion condition (or an analogous `connectedness' condition)
is needed to obtain the
convergence of the conditional measures
$\mu_{n,+}$. For example consider $r$ identical but disjoint graphs on $n/r$ vertices. Then conditioning on the sum of the spins being positive the probability that the sum of spins in a specific component is positive is of order $r^{-1/2}$. Therefore in this case we have:
\[
\mu_{n,+} \to  (1-q) \nu_{+}+ q\nu_{-} \, ,
\]
with $q = 1/2 - O(r^{-1/2})$. A similar construction may be repeated with a small number of edges connecting different components, e.g., when the components are connected in a cyclic fashion.

In order to identify the limit for $\mu_n$ and obtain our results,
there are a number of challenges that need to be overcome. First, while soft compactness arguments imply that subsequential limits exist, such arguments do not imply the existence of a proper limit. Second, recalling that there are uncountably many extremal Gibbs measures for $\Tree_k$, it is remarkable we are able to identify precisely those that appear in the limit. Finally, for conditional measures such as $\mu_{n,+}$ it is not even a priori clear that (subsequential) limits are in fact Gibbs measures.

%
%
\subsection{Proof strategy}
\label{sec:MainProof}

The basic idea of the proof is the following. Look at a ball of radius $t$ around a vertex $i$ in $G_n$.
Since $G_n$ is tree like, the ball is with high probability a tree.
The measure $\mu_n$ restricted to the ball is clearly a Gibbs measure
on a tree of radius $t$. The same is true (although less obvious)
for $\mu_{n,+}$.

In order to characterize the limit of this measure as $n\to\infty$,
\begin{enumerate}
\item[1.]
The probability of agreement between neighboring spins in the ball is asymptotically the same as in the measure $\nu_+$ on the infinite tree.
\item[2.] We further show that $\nu_{+}$ maximizes the probability of agreement between neighboring spins among all Gibbs measures on the tree. These two facts together imply that any local limit must converge to a convex combination of  $\nu_+$ and $\nu_-$.
\item[3.] By symmetry this already implies converges of $\mu_n$ to $\frac{1}{2}(\nu_+ + \nu_-)$. Note that this step does not require expansion, just the local weak convergence of the tree.
\item[4.]
In order to deal with the conditional measure, we use expansion to show that it is unlikely that simultaneously a positive fraction of the vertices have their neighborhood ``in the $+$ state'' and another positive fraction ``in the $-$ state''.
\end{enumerate}
%
%
\section{Proof of the main theorem}
We now proceed with the proof. For each of claims I and II
we break the proof into $3$ steps:
\begin{itemize}
\item[$(i)$] We consider a subsequence of sizes $\{n(m)\}_{m\in\naturals}$
along which $\mu_{n(m)}$ or $\mu_{n(m),+}$ converge locally
in average to a limit $\bar{\nu}$ or $\bnu_+$ (respectively).
\item[$(ii)$]
We prove that any such limit is in fact always the same and is
$\bnu = (1/2) (\nu_+ + \nu_-)$ for $\mu_{n(m)}$ and
(using expansion)
$\bnu_+ = \nu_+$ for $\mu_{n(m),+}$.
As a consequence the sequences themselves converge.
\item[$(iii)$]
Finally we show how is it possible to deduce local convergence
from convergence in average.
\end{itemize}
%
%
\subsection{Subsequential limits}
\label{sec:Subsequential}

The construction of subsequential weak limits is
based on a standard diagonal argument,
for similar results see \cite{AldLyo:07}.
For the sake of simplicity we refer to the measures $\mu_{n,+}$,
and construct the subsequential limit $\bnu_+$,
but the same procedure works for $\mu_n$ with limit $\bnu$.
Let $\Ball_I(t)$ be the ball of radius $t$ centered at a uniformly random
vertex $I$ in $V_n$, and $\ux$ be an Ising configuration with distribution
$\mu_{n,+}$.
If $\prob_n$ denotes the joint distribution of
$(\Ball_I(t),\ux_{\Ball_I(t)})$, we let
\begin{eqnarray}
\mu_{+,n}^t(\ux^*_{\Tree_k(t)})
\equiv \prob_n\big\{(\Ball_I(t),\ux_{\Ball_I(t)})
\simeq (\Tree_k(t),\ux^*_{\Tree_k(t)})\big\}\, .
\end{eqnarray}
Since this is a sequence of measures over a finite state space,
it converges over some subsequence $\{n_t(m)\}_{m\ge 0}$. Further,
since by hypothesis $\prob_n\{\Ball_i(t)
\simeq \Tree_k(t)\}\to 1$, the limits of $\mu_{+,n_t(m)}^t$ and $\mu_{n_t(m)}^t$ are in fact
probability measures. We call the limit $\limMes_+^{t}$.

Fix one of these subsequences $\{n_{t_0}(m)\}_{m\ge 0}$ for $t=t_0$,
leading to the limit $\limMes^{t_0}_+$, and recursively refine it
to  $\{n_{t_0}(m)\}_{m\ge 0}\supseteq \{n_{t_0+1}(m)\}_{m\ge 0}
\supseteq \{n_{t_0+2}(m)\}_{m\ge 0}\supseteq\dots$
leading to limits $\limMes^{t}_+$ for all $t\ge t_0$.
Notice that, for any graph $G_n$, any vertex $i$ and any $t$ we have
\begin{eqnarray}
\mu_{n,+}^t(\ux_{\Ball_i(t)}) = \sum_{\ux_{\Ball_i(t+1)\setminus\Ball_i(t)}}
\mu_{n,+}^{t+1}(\ux_{\Ball_i(t+1)})\, .
\end{eqnarray}
As a consequence, for any $t$, the measures limit $\limMes_+^{(t)}$ measure satisfies
\begin{eqnarray}
\limMes_+^{t}(\ux_{\Tree_k(t)}) = \sum_{\ux_{\Tree_k(t+1)\setminus\Tree_k(t)}}
\limMes_+^{t+1}(\ux_{\Tree_k(t+1)})\, .
\end{eqnarray}
By Kolmogorov extension theorem, there exist  measures
$\limMes_+$ over $\{+1,-1\}^{\Tree_k}$ such that
$\limMes_+^{t}$ are the marginals
of $\limMes_+$ over the variables in the subtree $\Tree_k(t)$.
By taking the diagonal subsequence $n(m)=n_m(m)$ we obtain
the desired subsequence $\{n(m)\}_{m\in\naturals}$ such that
$\mu_{n(m),+}$ converges locally on average to $\limMes_+$.

%
%
\subsection{$\limMes =  \frac{1}{2}(\nu_+ + \nu_-)$}

In this section we carry out our program in the case of the unconditional
measures $\mu_n$.
It is immediate that, since each of the measures
$\mu_n^t$ is a Gibbs measure on $\Tree_k$ (although with
a complicate boundary condition), the limit measure $\limMes$ is
also a Gibbs measure on $\Tree_k$ (i.e. $\limMes \in\G_k$).

For proving convergence of the unconditional measure we need  two lemmas.
The first one establishes that the $+$ (equivalently $-$)
Gibbs measure $\nu_+$ has the correct expected number of
edge disagreements (in physics terms, the correct energy density).
\begin{lemma}\label{lemma:EnergyLimit}
Let $\{G_n\}_{n\in\naturals}$ be a sequence of $k$-regular graphs
converging locally to $\Tree_k$, let $I$ be a uniformly random
vertex in $G_n$, and $J$ be chosen uniformly among its $k$
neighbors.
Then
\begin{eqnarray}
\lim_{n\to\infty}\E_{U_n}\,\mu_{n,+}(x_I \cdot x_J) = \lim_{n\to\infty}
\E_{U_n}\,\mu_n(x_I \cdot x_J) = \nu_+(x_{\root} \cdot x_1) =
\nu_-(x_{\root}\cdot x_1)\, ,
\end{eqnarray}
where $1$ is one of the neighbors of the root in $\Tree_k$,
and $\E_{U_n}$ denotes the expectation over the random edge $(I,J)$
in $G_n$.
\end{lemma}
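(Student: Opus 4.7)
The plan has three pieces: reduce both averages to the single quantity $\E_{U_n}\mu_n(x_Ix_J)$ via a spin-flip symmetry, establish a matching upper bound through Griffiths' inequality combined with the external-field Dembo--Montanari theorem, and obtain a matching lower bound from convexity of the free energy in $\beta$.

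I first exploit the involution $\ux\mapsto -\ux$. Since $x_I\cdot x_J$ is invariant under it and $\mu_{n,-}$ is the pushforward of $\mu_{n,+}$ under the flip, $\mu_{n,+}(x_ix_j)=\mu_{n,-}(x_ix_j)$ on every edge. Combined with the decomposition $\mu_n=\tfrac12(\mu_{n,+}+\mu_{n,-})$ (exact for odd $n$, and up to the vanishing contribution of $\{\sum_ix_i=0\}$ for even $n$), this gives $\E_{U_n}\mu_{n,+}(x_Ix_J)=\E_{U_n}\mu_n(x_Ix_J)$, so it suffices to prove the claim for the unconditional measure.

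For the upper bound, fix $B>0$ and let $\mu_n^{(\beta,B)}$ denote the Ising measure on $G_n$ with external field $B$. Griffiths' second inequality gives $\mu_n(x_ix_j)\le\mu_n^{(\beta,B)}(x_ix_j)$ on every edge, and the Dembo--Montanari theorem cited in the introduction yields $\lim_{n\to\infty}\E_{U_n}\mu_n^{(\beta,B)}(x_Ix_J)=\nu_+^{(\beta,B)}(x_{\root}x_1)$. Hence $\limsup_{n\to\infty}\E_{U_n}\mu_n(x_Ix_J)\le\nu_+^{(\beta,B)}(x_{\root}x_1)$, and letting $B\downarrow 0$ together with the standard monotone continuity $\nu_+^{(\beta,B)}(x_{\root}x_1)\downarrow\nu_+(x_{\root}x_1)$ (a consequence of Griffiths monotonicity plus weak continuity of the $+$-boundary state in the external field) produces the upper bound $\nu_+(x_{\root}x_1)$.

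For the matching lower bound I use convexity of the free energy. Set $\phi_n(\beta):=\tfrac1n\log Z_n(\beta,0)$, which is convex in $\beta$ with $\phi_n'(\beta)=\tfrac{k}{2}\E_{U_n}\mu_n(x_Ix_J)$. The elementary Lipschitz estimate $|\phi_n(\beta,B)-\phi_n(\beta,0)|\le|B|$, combined with pointwise convergence of $\phi_n(\beta,B)$ for $B>0$ (the Dembo--Montanari free-energy formula on the tree), gives $\phi_n(\beta,0)\to\phi(\beta):=\lim_{B\downarrow 0}\phi(\beta,B)$; differentiating the tree free-energy formula in $\beta$ then identifies $\phi'(\beta)=\tfrac{k}{2}\nu_+(x_{\root}x_1)$ at every differentiability point of $\phi$. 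Pointwise convergence of convex functions on $\R$ forces convergence of derivatives at every such point, so $\E_{U_n}\mu_n(x_Ix_J)\to\nu_+(x_{\root}x_1)$ off a countable exceptional set, and monotonicity of both sides in $\beta$ (again Griffiths) extends the equality to every $\beta$. The main technical obstacle is the $(n\to\infty,\,B\downarrow 0)$ double-limit interchange needed to pin down $\lim_n\phi_n(\beta,0)$ with the right $\beta$-derivative; once this is controlled the Griffiths sandwich and the convex-function derivative lemma close the argument, and $\nu_+(x_{\root}x_1)=\nu_-(x_{\root}x_1)$ is immediate from the $\pm 1$ spin-flip.
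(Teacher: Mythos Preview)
Your argument is correct and rests on the same mechanism as the paper's proof: convexity of $\phi_n(\beta)=\tfrac1n\log Z_n(\beta)$ together with its pointwise convergence to the explicit tree free energy $\phi(\beta)$, forcing $\phi_n'(\beta)=\tfrac{k}{2}\,\E_{U_n}\mu_n(x_Ix_J)\to\phi'(\beta)=\tfrac{k}{2}\,\nu_+(x_{\root}x_1)$ wherever $\phi$ is differentiable. The spin-flip reduction from $\mu_{n,+}$ to $\mu_n$ is also the same.

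The differences are two detours that the paper avoids. First, the Dembo--Montanari free-energy theorem already covers $B=0$ (the paper restates it in exactly that form), so your Lipschitz-in-$B$ passage from $B>0$ down to $B=0$ is not needed. Second, and more to the point, the convex-derivative lemma already delivers the \emph{full} limit, not merely a lower bound, at every differentiability point of $\phi$; your Griffiths upper-bound step is therefore redundant. The paper handles the remaining at-most-countable exceptional set simply by noting that $\beta\mapsto\nu_+(x_{\root}x_1)$ is continuous (immediate from the explicit formula $(\tanh\beta+\tanh^2 h)/(1+\tanh\beta\tanh^2 h)$ with $h=h(\beta)$ the fixed point). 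Your final ``monotonicity of both sides extends the equality'' step in fact also requires this continuity---two nondecreasing functions agreeing on a dense set can still differ at a jump of the limit---so once you invoke it explicitly, the Griffiths sandwich can be dropped entirely.
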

For the proof of this Lemma we refer to Section \ref{sec:ProofEnergyLimit}.
Notice that $\nu_+$ and $\nu_-$ have the same expectation
of the product $x_{\root}x_1$ by symmetry under inversion
$\{x_i\}\to \{-x_i\}$. The probability that the spins at $\root$ and
$1$ agree is simply $(1+\nu (x_{\root}\cdot x_1))/2$.  The
second Lemma shows that $\nu_+$, $\nu_-$ are uniquely
characterized by this agreement probability among all Ising Gibbs
measures on $\Tree_k$.
\begin{lemma} \label{lemma:Extremal}
Let $\nu$ be a Gibbs measure for the Ising
model on $\Tree_k$. Then
\begin{eqnarray}
\nu(x_{\root}\cdot x_1) \le  \nu_+(x_{\root}\cdot x_{1})=\nu_-(x_{\root}\cdot x_{1})\, ,
\end{eqnarray}
and the inequality is strict unless $\nu$ is a convex combination
of $\nu_+$ and $\nu_-$.
\end{lemma}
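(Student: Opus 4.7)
The plan is first to reduce to extremal Gibbs measures via the simplex decomposition, and then on each extremal component to express $\nu(x_\root\cdot x_1)$ through a one-dimensional cavity parametrization, so that the bound follows from the monotonicity of the belief-propagation recursion. Write $\nu = \int \nu_\omega\,d\pi(\omega)$ for the extremal decomposition of $\nu$ in the simplex of Gibbs measures on $\Tree_k$. Each extremal $\nu_\omega$ has trivial tail, so conditioning on the tails of the two subtrees $T_\root,T_1$ obtained by cutting the edge $(\root,1)$ determines each subtree measure deterministically, and each can be summarized by a single cavity field $u_\root(\omega),u_1(\omega)$.

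A direct computation using the Markov property of the Ising model on the tree gives the identity
\[
\nu_\omega(x_\root\cdot x_1) \;=\; \frac{\tanh\beta + \tanh u_\root(\omega)\,\tanh u_1(\omega)}{1+\tanh\beta\,\tanh u_\root(\omega)\,\tanh u_1(\omega)} \;=\; f\!\bigl(\tanh u_\root(\omega)\,\tanh u_1(\omega)\bigr),
\]
where $f(y)=(\tanh\beta+y)/(1+y\tanh\beta)$ is strictly increasing on $[-1,1]$. Each cavity field is produced by the BP recursion $u=\sum_{j=1}^{k-1}\atanh(\tanh\beta\,\tanh u_j)$, and since each summand is strictly monotone in its argument, a monotone coupling with the plus and minus boundary conditions at depth $t\to\infty$ shows that $u_\root(\omega),u_1(\omega)\in[-u_*,u_*]$, where $u_*>0$ is the largest fixed point of $u=(k-1)\atanh(\tanh\beta\,\tanh u)$. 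For $\nu_+$ the two cavity fields are deterministically equal to $u_*$, giving $\nu_+(x_\root\cdot x_1)=f(\tanh^2 u_*)$, which equals $\nu_-(x_\root\cdot x_1)$ by spin reversal. Combining the pointwise bound $\tanh u_\root(\omega)\,\tanh u_1(\omega)\le \tanh^2 u_*$ with monotonicity of $f$ and integrating against $\pi$ yields $\nu(x_\root\cdot x_1)\le \nu_+(x_\root\cdot x_1)$.

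For the equality case, strict monotonicity of $f$ forces $\tanh u_\root(\omega)\,\tanh u_1(\omega)=\tanh^2 u_*$ for $\pi$-almost every $\omega$, hence $(u_\root(\omega),u_1(\omega))\in\{(u_*,u_*),(-u_*,-u_*)\}$. Since the monotone map $u\mapsto \atanh(\tanh\beta\,\tanh u)$ attains its maximum on $[-u_*,u_*]$ only at $u=u_*$, the fixed-point identity $u_*=(k-1)\atanh(\tanh\beta\,\tanh u_*)$ under the constraint $|u_j|\le u_*$ forces every child cavity field $u_j$ to equal $u_*$; iterating this down each subtree identifies the restriction of $\nu_\omega$ with that of $\nu_+$ (resp.~$\nu_-$). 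Thus $\pi$ is supported on $\{\nu_+,\nu_-\}$, so $\nu$ is a convex combination of these two measures.

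The main obstacle is not the algebraic identity or the final monotonicity step, but rather the precise translation of ``cavity field of an extremal Gibbs measure'' into a well-defined quantity lying almost surely in $[-u_*,u_*]$. This requires combining the simplex decomposition of Gibbs measures on $\Tree_k$ with the standard monotonicity of the Ising BP iteration under arbitrary boundary conditions, and handling the passage to the infinite-volume limit so that the fixed-point bound really applies to every Gibbs measure rather than only to the finite-volume or translation-invariant ones. Once this structural piece is in place, the rest of the proof is an elementary computation combined with the strict monotonicity of $f$.
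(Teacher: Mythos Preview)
Your proposal is correct and follows essentially the same route as the paper: reduce to extremal Gibbs measures, parametrize the edge correlation by the two one-sided cavity quantities, and use monotonicity plus the sandwich $\nu_-\preceq\nu\preceq\nu_+$ to bound these in $[-u_*,u_*]$ (the paper writes this with magnetizations $m_i=\tanh u_i$ rather than fields $u_i$, and invokes the Backward Martingale Convergence Theorem to make the cavity quantities well defined, which is exactly the ``structural piece'' you flagged). The only noteworthy difference is in the equality case: you push the BP recursion down the tree to force every cavity field to equal $\pm u_*$, whereas the paper instead observes that for an extremal $\nu$ the root magnetization formula $\nu(x_\root)=(m_\root+\tanh\beta\,m_1)/(1+\tanh\beta\,m_\root m_1)$ together with $\nu(x_\root)<\nu_+(x_\root)$ for $\nu\neq\nu_+$ already forces $(m_\root,m_1)\neq(m^+,m^+)$, hence strict inequality in the edge correlation. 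Both arguments are short and valid; yours is slightly more self-contained, the paper's slightly quicker.
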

The proof of this Lemma can be found in Section
\ref{sec:LemmaExtremal}.
We can now prove the following:
\begin{prop}\label{prop:2}
Let $\{G_n\}_{n\in\naturals}$ be a sequence of $k$-regular graphs that
converge locally to the tree
$\Tree_k$. Then for $(k-1)\tanh\beta>1$, it holds that
$\mu_n$ converges locally in average to $(1/2)(\nu_+ + \nu_-)$.
\end{prop}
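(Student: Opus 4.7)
The plan is to combine the three ingredients already assembled: the compactness/diagonal argument from Section~\ref{sec:Subsequential}, the energy identity in Lemma~\ref{lemma:EnergyLimit}, and the extremality characterization in Lemma~\ref{lemma:Extremal}. Start by invoking Section~\ref{sec:Subsequential} to pass to a subsequence $\{n(m)\}$ along which $\mu_{n(m)}$ converges locally on average to some limit $\bnu\in\H_k$. Because local convergence of the graphs forces $\Ball_I(t)\simeq \Tree_k(t)$ with probability $1-o(1)$, and $\mu_n^t$ conditioned on that event is an Ising Gibbs measure on $\Tree_k(t)$ with some (random) boundary configuration, the DLR conditions for $\Tree_k$ are preserved in the limit. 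So $\bnu\in\G_k$.

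Next I would pin down the pair marginal $\bnu(x_\root \cdot x_1)$. By Lemma~\ref{lemma:EnergyLimit}, the average edge correlation $\E_{U_n}\mu_n(x_Ix_J)$ converges to $\nu_+(x_\root x_1)$. On the other hand, local-on-average convergence of $\mu_{n(m)}$ to $\bnu$ and the fact that $\bnu$ is automorphism-invariant on the $k$-regular tree (being a limit of locally averaged measures on locally homogeneous graphs) imply that $\E_{U_n}\mu_{n(m)}(x_Ix_J)\to \bnu(x_\root x_1)$. Thus $\bnu(x_\root x_1)=\nu_+(x_\root x_1)$.

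Now apply Lemma~\ref{lemma:Extremal}: since $\bnu\in\G_k$ achieves the maximal possible nearest-neighbor correlation, $\bnu$ must be a convex combination $\alpha\nu_++(1-\alpha)\nu_-$ for some $\alpha\in[0,1]$. To identify $\alpha$, use the spin-flip symmetry of $\mu_n$ (since $B=0$, the Hamiltonian is even in $\ux$), which is inherited by $\bnu$. Under the involution $\ux\mapsto-\ux$, $\nu_+$ and $\nu_-$ are swapped, so symmetry forces $\alpha=1/2$ and hence $\bnu=\tfrac12(\nu_++\nu_-)$. Since this identification is independent of the subsequence, every subsequential limit equals $\tfrac12(\nu_++\nu_-)$, and compactness of $\H_k$ upgrades subsequential convergence to convergence of the full sequence.

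The genuinely substantive parts of the argument are the two lemmas invoked: Lemma~\ref{lemma:EnergyLimit}, which is where one has to control how the random boundary on $\Ball_I(t)$ behaves under $\mu_n$ (and which presumably uses monotonicity and an interpolation or sandwiching between free and plus boundary conditions), and Lemma~\ref{lemma:Extremal}, which is the tree-theoretic fact that $\nu_\pm$ uniquely maximize edge correlations. Granted those, the proposition itself is essentially a soft argument: local averaging preserves the Gibbs property and respects the $\integers_2$-symmetry, and the two lemmas together force the limit to lie on the one-parameter family $\{\alpha\nu_++(1-\alpha)\nu_-\}$ with $\alpha=1/2$ selected by symmetry.
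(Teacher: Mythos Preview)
Your proof is correct and follows essentially the same approach as the paper's: pass to a subsequential local-on-average limit $\bnu$, observe it is Gibbs, use Lemma~\ref{lemma:EnergyLimit} to match the edge correlation with $\nu_+$, apply Lemma~\ref{lemma:Extremal} to force $\bnu=\alpha\nu_++(1-\alpha)\nu_-$, and then use spin-flip symmetry to conclude $\alpha=1/2$. One tangential remark: your guess about how Lemma~\ref{lemma:EnergyLimit} is proved (monotonicity/sandwiching) is off---the paper obtains it by differentiating the free-energy limit of Dembo--Montanari and using convexity---but this does not affect your argument for the proposition itself.
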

\begin{proof}
By Lemma \ref{lemma:EnergyLimit} and weak convergence, we have
$\limMes(x_{\root}\cdot x_1) =
\nu_+(x_{\root}\cdot x_1)$.   By Lemma \ref{lemma:Extremal},
$\bnu = (1-q)\nu_++q\bnu_-$ for some $q\in [0,1]$.
On the other hand $\mu_{n,+}$ is symmetric under spin inversion for each
$n$, and therefore $\bnu$ must be symmetric as well, whence $q=1/2$
\end{proof}

We can now prove the first part of our main result.
\begin{proof}[Proof (Theorem \ref{thm:Main}, part I)]
By a similar construction to the one recalled
in Section \ref{sec:Subsequential}, and compactness of
$\M_k$, we can construct a subsequence $\{n(m)\}_{m\in\naturals}$
such that $\mu_{n(m)}$ converges locally (not only in average)
to a distribution $\mm$ over $\H_k$. By the arguments above,
$\mm$ is in fact a measure over the space Ising Gibbs measures $\G_k$.

We claim that any such subsequential weak limit $\mm$
is in fact a point mass at
$(1/2)(\nu_++\nu_-)$. Since
$\nu\mapsto \nu(x_{\root}\cdot x_1)$ is continuous in the weak topology it follows that
\begin{eqnarray}
\lim_{m\to\infty}\E_{U_n}\mu_{n(m)}(x_I\cdot x_J) = \int \nu(x_{\root}\cdot x_1)
\;\mm(\de\nu)\,.
\end{eqnarray}
By Lemma \ref{lemma:EnergyLimit}, this implies
\begin{eqnarray}
\int \nu(x_{\root}\cdot x_1)\;\de\mm(\nu) = \nu_+(x_{\root}\cdot x_1)\,,
\end{eqnarray}
and  therefore, by Lemma \ref{lemma:Extremal}, $\mm$ is supported
on Ising Gibbs measures $\nu$ that are convex combinations of
$\nu_+$ and $\nu_-$.
Finally, $\mu_{n}$ is almost surely symmetric for any $n$.
Here `symmetric' means that, for any configuration $\ux_{\Ball_i(t)}$,
$\mu_n^t(\ux_{\Ball_i(t)}) =\mu_n^t(-\ux_{\Ball_i(t)})$.
Therefore $\mm$ is supported on Ising Gibbs measures that are
symmetric.

There is  only one Ising Gibbs measure that is a convex combination
of $\nu_+$ and $\nu_-$ and is symmetric, namely $\nu = (1/2)(\nu_++\nu_-)$.
Hence $\mm$ is a point mass on this distribution.
\end{proof}

%
%
\subsection{$\limMes_+ =  \nu_+$}

We now turn to the subsequence of conditional measures
$\{\mu_{n(m),+}\}_{m\in\naturals}$ converging locally in average to
$\limMes_+$.
The goal of this subsection is to show that $\limMes_+$ is equal to $\nu_+$.

For this we repeat the previous proof with two additional ingredients.
First we need to show that $\limMes_+$ is a Gibbs measure on the tree
$\Tree_k$. This requires proof since the conditioning
on $\{\sum_{i\in V_n}x_i>0\}$ implies that the measures $\mu_{n,+}^t$
are not Gibbs measures. The Gibbs property is only recovered in the limit.

Second even after we have established that $\limMes_+$ is a Gibbs measure,
this measure is not symmetric with respect to
spin flip. Therefore the argument above only implies that
$\limMes_+ = (1-q) \nu_+ + q \nu_-$. It remains to show that $q=0$.
This is where the expansion assumption is used.
The first lemma we prove is the following:
\begin{lemma}\label{lemma:Gibbs}
Any subsequential limit $\limMes_+$ constructed as above is
an Ising-Gibbs measure on $\Tree_k$.
\end{lemma}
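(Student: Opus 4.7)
My plan is to verify the DLR equations for $\bnu_+$ in the finite-volume form most convenient on a tree: $\bnu_+\in\G_k$ if and only if, for every $s\ge 1$ and every pair $\ux^*, \uy^* \in \{+1,-1\}^{\Tree_k(s)}$ that agree on the outer shell $\Tree_k(s)\setminus \Tree_k(s-1)$, one has
\begin{eqnarray}
\bnu_+^s(\ux^*)\, e^{-\beta H_s(\ux^*)} = \bnu_+^s(\uy^*)\, e^{-\beta H_s(\uy^*)},\qquad H_s(\underline{z}) := \sum_{(i,j)\in E(\Tree_k(s))} z_i z_j.
\end{eqnarray}
Kolmogorov consistency of $\{\bnu_+^s\}_{s\ge 0}$ already holds by the construction in Section~\ref{sec:Subsequential}, so this ratio identity at every $s$ is enough. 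I will obtain it as the $m\to\infty$ limit of the analogous finite-volume ratio under $\mu_{n(m),+}$.

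Fix $n$ and a vertex $i \in V_n$ with $\Ball_i(s)\simeq \Tree_k(s)$ (this happens for a $1-o(1)$ fraction of $i$ by local convergence), and pull $\ux^*,\uy^*$ back to configurations $\ux^{*,i},\uy^{*,i}$ on $\Ball_i(s)$. Let $S_+ := \{\sum_{j\in V_n} X_j > 0\}$. Bayes gives
\begin{eqnarray}
\frac{\mu_{n,+}(X_{\Ball_i(s)}=\ux^{*,i})}{\mu_{n,+}(X_{\Ball_i(s)}=\uy^{*,i})} = \frac{\mu_n(X_{\Ball_i(s)}=\ux^{*,i})}{\mu_n(X_{\Ball_i(s)}=\uy^{*,i})} \cdot \rho_{n,i}, \qquad \rho_{n,i} := \frac{\mu_n(S_+\mid X_{\Ball_i(s)}=\ux^{*,i})}{\mu_n(S_+\mid X_{\Ball_i(s)}=\uy^{*,i})}.
\end{eqnarray}
Because $\mu_n$ is Gibbs on $G_n$ and $\Ball_i(s)$ separates (being tree-like), Markov identifies the first factor with the Ising ratio $e^{\beta(H_s(\ux^*)-H_s(\uy^*))}$. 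The same Markov property shows that $\mu_n(S_+\mid X_{\Ball_i(s)}=\underline{w})$ depends on $\underline{w}$ only through (a) the outer shell $\underline{w}_{\Ball_i(s)\setminus\Ball_i(s-1)}$, which controls the conditional Ising law on $G_n\setminus\Ball_i(s)$, and (b) the total ball sum $\sum_{j\in\Ball_i(s)} w_j$, which shifts the threshold in $S_+$. Since $\ux^{*,i},\uy^{*,i}$ agree on the outer shell, the conditional law of $M_{\text{ext}}^{(i)} := \sum_{j\notin\Ball_i(s)} X_j$ is the same in numerator and denominator, while the two thresholds differ by at most $2|\Tree_k(s-1)|$, a constant independent of $n$. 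Hence $\rho_{n,i}=F_{n,i}(c_1)/F_{n,i}(c_2)$, where $F_{n,i}$ is the survival function of $M_{\text{ext}}^{(i)}$ given the shell and $|c_1-c_2|\le 2|\Tree_k(s-1)|$.

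It remains to show $\rho_{n,i}=1+o(1)$ for $i$ in a set of asymptotic full $\mu_{n,+}$-measure; averaging over $I\sim U_n$ then yields $\mu_{n(m),+}^s(\ux^*)/\mu_{n(m),+}^s(\uy^*) \to e^{\beta(H_s(\ux^*)-H_s(\uy^*))}$, and the DLR identity for $\bnu_+^s$ follows. This is the main obstacle, and the expander hypothesis enters here: the edge-expansion inequality $|\partial S|\ge \lambda|S|$ combined with a Peierls-type energy/entropy count shows that the global magnetization $M_n = \sum_{j\in V_n} X_j$ concentrates exponentially near $\{-m^\ast n, +m^\ast n\}$ with $m^\ast = \nu_+(x_\root)$, so in particular $\mu_n(|M_n|\le C)\to 0$ for every fixed $C$. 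This concentration transfers to $M_{\text{ext}}^{(i)}$ after conditioning on the bounded shell (which has $\Theta(1)$ probability), giving $F_{n,i}(c_1)-F_{n,i}(c_2)=o(1)$ while $F_{n,i}(c_2)$ stays $\Theta(1)$ on the positive-phase event; hence $\rho_{n,i}=1+o(1)$, closing the argument.
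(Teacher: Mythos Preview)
Your high-level strategy---verify DLR by a ratio argument and reduce to showing that the positivity constraint has asymptotically no effect via $\rho_{n,i}\to 1$---is correct and is essentially the route the paper takes. The reduction to controlling the survival function $F_{n,i}$ of the external magnetization $M_{\text{ext}}^{(i)}$ at two thresholds differing by $O(1)$ is also right.

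The gap is in your last paragraph. To obtain $F_{n,i}(c_1)-F_{n,i}(c_2)=o(1)$ you only need the anti-concentration statement $\hat\mu\big(|M_{\text{ext}}^{(i)}|\le C\big)\to 0$ for each fixed $C$, where $\hat\mu$ is the Ising measure on $G_n\setminus\Ball_i(s)$ with the given shell boundary. You assert this via an expansion/Peierls argument yielding exponential concentration of $M_n$ near $\pm m^\ast n$. That assertion is both much stronger than needed and not justified: a Peierls energy/entropy count on a $(1/2,\lambda)$ expander requires $2\beta\lambda$ to dominate the relevant entropy, which fails for small $\lambda>0$, and in any case Peierls does not pin the concentration value at $m^\ast=\nu_+(x_{\root})$---that would essentially presuppose the main theorem. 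Your lower bound on $F_{n,i}(c_2)$ ``on the positive-phase event'' is similarly circular.

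The paper closes this step with a completely different and elementary tool, Lemma~\ref{lemma:CLT}: any $k$-regular graph on $n$ vertices has an independent set of size at least $n/(k+1)$; conditioning on its complement makes the remaining spins independent Bernoulli with parameters bounded away from $0$ and $1$, and Berry--Esseen then gives $\hat\mu\big(M_{\text{ext}}^{(i)}=\ell\big)\le C/\sqrt{n}$ uniformly in $\ell$ and in the boundary. Summing over the $O(1)$ values between the two thresholds yields $F_{n,i}(c_1)-F_{n,i}(c_2)=O(1/\sqrt{n})$. The lower bound $F_{n,i}(c_2)\ge c>0$ comes not from any phase decomposition but simply from comparison of $\hat\mu$ to the symmetric measure $\mu_n$ at bounded Radon--Nikodym cost $e^{2\beta k|\Ball_i(s)|}$, giving $F_{n,i}(c_2)\ge\tfrac12 e^{-2\beta k|\Ball_i(s)|}$. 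In particular, no expansion hypothesis is used anywhere in the paper's proof of this lemma.
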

We defer the proof to Section \ref{sec:ProofGibbs}.  Given Lemma \ref{lemma:Gibbs} the following lemma follows immediately from Lemmas \ref{lemma:EnergyLimit} and \ref{lemma:Extremal}.

\begin{lemma} \label{cor:q}
For any subsequential limit $\limMes_+$ there exists a $q \in [0,1]$ such that
\begin{eqnarray}
\limMes_+ = (1-q)\, \nu_+ + q\, \nu_-\, .\label{eq:Mixture}
\end{eqnarray}
\end{lemma}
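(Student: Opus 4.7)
The plan is to chain the three preceding lemmas together. First, by Lemma \ref{lemma:Gibbs}, we know $\limMes_+$ is an Ising Gibbs measure on $\Tree_k$, so Lemma \ref{lemma:Extremal} is applicable to it. The strategy is to show that $\limMes_+$ saturates the upper bound on $\nu(x_\root \cdot x_1)$ given by Lemma \ref{lemma:Extremal}, and then invoke the equality case of that lemma.

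To compute $\limMes_+(x_\root \cdot x_1)$, I would use the fact that the function $\ux_{\Tree_k(1)} \mapsto x_\root x_1$ is a bounded local function of a finite neighborhood of the root, so local weak convergence in average of $\mu_{n(m),+}^1$ to $\limMes_+^1$ transfers expectations. More precisely, since $(\Ball_I(1), \ux_{\Ball_I(1)})$ converges in total variation (after averaging over $I$) to $(\Tree_k(1), \ux_{\Tree_k(1)})$ with $\ux$ drawn from $\limMes_+$, we get
\begin{equation*}
\lim_{m\to\infty} \E_{U_{n(m)}}\, \mu_{n(m),+}(x_I \cdot x_J) \;=\; \limMes_+(x_\root \cdot x_1).
\end{equation*}
Lemma \ref{lemma:EnergyLimit} identifies the left-hand side with $\nu_+(x_\root \cdot x_1)$, so $\limMes_+(x_\root \cdot x_1) = \nu_+(x_\root \cdot x_1)$.

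Finally, Lemma \ref{lemma:Extremal} asserts that $\nu(x_\root \cdot x_1) \le \nu_+(x_\root \cdot x_1)$ for every Ising Gibbs measure $\nu$ on $\Tree_k$, with equality only for convex combinations of $\nu_+$ and $\nu_-$. Since $\limMes_+$ is a Gibbs measure achieving equality, there exists $q \in [0,1]$ with $\limMes_+ = (1-q)\nu_+ + q\nu_-$, which is the claim in \eqref{eq:Mixture}.

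I do not expect any real obstacle here: the only subtle point is making sure that the averaged local weak convergence of $\mu_{n(m),+}$ to $\limMes_+$ is strong enough to push forward expectations of the two-spin function $x_\root x_1$, and this is immediate because $x_\root x_1$ depends only on the radius-$1$ neighborhood and the total variation convergence in the definition of local convergence in average handles bounded functions of bounded-radius neighborhoods directly. The genuinely difficult inputs (the energy identity and the extremality characterization) have already been packaged into Lemmas \ref{lemma:EnergyLimit} and \ref{lemma:Extremal}, and the Gibbs property into Lemma \ref{lemma:Gibbs}; pinning down $q = 0$ (as opposed to merely $q \in [0,1]$) is the task left for the subsequent subsection, where expansion will be used.
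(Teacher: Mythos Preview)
Your proposal is correct and follows essentially the same approach as the paper: invoke Lemma~\ref{lemma:Gibbs} to ensure $\limMes_+$ is Gibbs, use local convergence in average together with Lemma~\ref{lemma:EnergyLimit} to get $\limMes_+(x_\root\cdot x_1)=\nu_+(x_\root\cdot x_1)$, and then apply the equality case of Lemma~\ref{lemma:Extremal}. The paper's own proof is in fact terser than yours, merely noting that the result follows immediately from these three lemmas.
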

\begin{proof}
By Lemma~\ref{lemma:Gibbs} the measure $\limMes_+$ is an Ising Gibbs measure on $\Tree_k$. If it was not a convex combination of $\nu_+$ and $\nu_-$ a contradiction to Lemma~\ref{lemma:Extremal} would be derived.
\end{proof}.

The last step consists of arguing that $q=0$. Given a vertex $i$
(either in a graph $G_n$ of the sequence or of $\Tree_k$), an
integer $\ell\ge 1$ and a random Ising configuration $\ux$,
let
\begin{eqnarray}
\F_{i}(\ell,\delta,\ux) \equiv \ind\Big\{\sum_{j\in\Ball_i(\ell)}x_j\le
-\delta\, |\Ball_i(\ell)|\Big\}\, ,
\end{eqnarray}
where $\delta\in(0,1)$ will be chosen below.  Roughly speaking $\F_i$ indicates which vertices are in the ``$-$ state''.
We will drop reference to $\delta$ and to the configuration $\ux$ when clear
from the context. The following lemmas will be proven in Section
\ref{sec:Markov}.
\begin{lemma}\label{l:Markov}
Let $\{G_n\}$ be a sequence of graphs converging locally to $\Tree_k$,
and, for each $n$, $\ux=\ux(n)$ be a configuration in the
support of $\mu_{n,+}$. Then there exists $n_0$, depending on $\delta$,
$\ell$ and the graph sequence, but not on $\ux$,  such that, for all
$n\ge n_0$,
\begin{eqnarray}
%
\E_{U_n}(\F_I(\ell,\delta,\ux) ) 
\le \frac{1}{1+\delta/2}\, ,
\label{eq:APriori}
\end{eqnarray}
where $\E_{U_n}$ denotes expectation with respect to the uniformly random
vertex $I$ in $V_n$.
\end{lemma}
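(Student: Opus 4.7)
The plan is to prove the lemma by a pure double-counting argument that uses essentially nothing about $\mu_{n,+}$ except the tautological fact that any configuration $\ux$ in its support satisfies $N_+ := |\{j \in V_n : x_j = +1\}| \ge n/2$. Set $B := |\Tree_k(\ell)|$ and $V_{\rm tree} := \{i \in V_n : \Ball_i(\ell) \simeq \Tree_k(\ell)\}$, and write $\eta_n := 1 - |V_{\rm tree}|/n$, which vanishes as $n\to\infty$ by local convergence. Observe that in any $k$-regular graph $|\Ball_j(\ell)| \le B$, with equality exactly on $V_{\rm tree}$. For each $i \in V_n$, set $M_i := \sum_{j \in \Ball_i(\ell)} x_j$ and $N_+(i) := |\{j \in \Ball_i(\ell) : x_j = +1\}|$, so that $M_i = 2N_+(i) - |\Ball_i(\ell)|$. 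Let $W := \{i \in V_n : \F_i(\ell,\delta,\ux) = 1\}$ and $V_- := W \cap V_{\rm tree}$.

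The first step is an upper bound on $\sum_i M_i$: using $M_i \le -\delta|\Ball_i(\ell)|$ on $W$ and $M_i \le |\Ball_i(\ell)|$ elsewhere, one gets
\[
\sum_{i\in V_n} M_i \;\le\; S - (1+\delta)\sum_{i\in W}|\Ball_i(\ell)|, \qquad S := \sum_{i\in V_n}|\Ball_i(\ell)| \le Bn.
\]
The second step is a matching lower bound obtained by exchanging the order of summation: the symmetry $j \in \Ball_i(\ell) \Leftrightarrow i \in \Ball_j(\ell)$ gives $\sum_i N_+(i) = \sum_{j:x_j=+1}|\Ball_j(\ell)| \ge B(N_+ - \eta_n n) \ge B(n/2 - \eta_n n)$, whence
\[
\sum_{i\in V_n} M_i \;=\; 2\sum_{i} N_+(i) - S \;\ge\; Bn(1 - 2\eta_n) - S.
\]
Combining these two inequalities, using $S \le Bn$ on the right-hand side to cancel $S$ with room to spare, and bounding $\sum_{i\in W}|\Ball_i(\ell)| \ge B|V_-|$ below produces
\[
|V_-|/n \;\le\; \frac{1+2\eta_n}{1+\delta}.
\]

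Since $|W\setminus V_-| \le n-|V_{\rm tree}| = \eta_n n$, the final step is the trivial estimate $\E_{U_n}(\F_I) = |W|/n \le (1+2\eta_n)/(1+\delta) + \eta_n$. Because $1/(1+\delta) < 1/(1+\delta/2)$ with a strictly positive gap $\delta/[2(1+\delta)(1+\delta/2)]$ depending only on $\delta$, one may choose $n_0 = n_0(\delta,\ell,\{G_n\})$ large enough that $\eta_n$ is below this gap for all $n \ge n_0$, giving the claimed inequality \emph{uniformly} in $\ux$ in the support of $\mu_{n,+}$, as required. I do not expect any serious obstacle in the argument: the only points that need attention are orienting the double-count so that $S$ drops out of both inequalities and checking that the slack between $1/(1+\delta)$ and $1/(1+\delta/2)$ is enough to absorb the $O(\eta_n)$ loss caused by the few non-tree-like neighborhoods.
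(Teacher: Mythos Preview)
Your proof is correct and follows essentially the same approach as the paper: both arguments double-count $\sum_i\sum_{j\in\Ball_i(\ell)} x_j$ using the symmetry $j\in\Ball_i(\ell)\Leftrightarrow i\in\Ball_j(\ell)$, use local convergence to control the $o(n)$ vertices with non-tree-like neighborhoods, and then convert the resulting lower bound into an upper bound on $|W|/n$ via a Markov-type step. The only cosmetic difference is that the paper normalizes by $|\Ball_i(\ell)|$ first and applies Markov's inequality to $1-Y_i$ in one line, whereas you keep the unnormalized sums and split explicitly into $V_{\rm tree}$ and its complement; the resulting bound $\frac{1}{1+\delta}+o_n(1)$ is the same.
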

The following lemma is an immediate consequence of the definition of local weak convergence.
\begin{lemma}\label{l:FlwcLimits}
Consider a uniformly random vertex $I$ in $G_n$, let $J$ be one
of its neighbors (again uniformly random),
and let $\{n(m)\}_{m\in\naturals}$ a subsequence of graph sizes
along which $\mu_{n(m),+}$ converges locally on average to $\limMes_{+}$.
Then we have
\begin{eqnarray}
\lim_{m\to\infty}\E_{U_{n(m)}}\mu_{n(m),+} (\F_I(\ell)) &=&
\limMes_{+}(\F_{\root}(\ell))\, ,\label{eq:LimitF1}\\
\lim_{m\to\infty} \E_{U_{n(m)}}\mu_{n(m),+} (\F_I(\ell)\neq \F_J(\ell )) &=&
\limMes_{+}(\F_{\root}(\ell) \neq \F_1(\ell) )\, ,\label{eq:LimitF2}
\end{eqnarray}
with $\E$ denoting expectation with respect to the law $U_{n(m)}$
of vertices $I$ and $J$, and $1$ one of the neighbors of $\root$.
\end{lemma}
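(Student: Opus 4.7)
The plan is to express each quantity in (\ref{eq:LimitF1})--(\ref{eq:LimitF2}) as the expectation of a bounded function of a bounded-radius neighborhood of $I$, and then apply the definition of local convergence on average (Definition A) directly.

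First note that $\F_i(\ell,\delta,\ux) = \ind\{\sum_{j\in\Ball_i(\ell)}x_j \le -\delta|\Ball_i(\ell)|\}$ depends only on the pair $(\Ball_i(\ell),\ux_{\Ball_i(\ell)})$. In particular $\F_I(\ell)$ is a bounded measurable function $g_1$ of the pair $(\Ball_I(\ell),\ux_{\Ball_I(\ell)})$, whose joint law under $\mu_{n(m),+}$ together with the uniform choice of $I$ is exactly $\prob_{n(m)}^{\ell}$. Local convergence on average at radius $\ell$ gives $d_{\sTV}(\prob_{n(m)}^{\ell},\delta_{\Tree_k(\ell)}\times\limMes_+^{\ell})\to 0$, and since $g_1$ is bounded this yields
\[
\E_{U_{n(m)}}\mu_{n(m),+}(\F_I(\ell))\;=\;\int g_1\,\de\prob_{n(m)}^{\ell}\;\longrightarrow\;\int g_1\,\de(\delta_{\Tree_k(\ell)}\times\limMes_+^{\ell})\;=\;\limMes_+(\F_\root(\ell)),
\]
which is (\ref{eq:LimitF1}).

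For (\ref{eq:LimitF2}), condition on $I$ and average over its $k$ neighbors to rewrite the left-hand side as
\[
\E_{U_{n(m)}}\mu_{n(m),+}\Bigl[\frac{1}{k}\sum_{j'\sim I}\ind\{\F_I(\ell)\neq \F_{j'}(\ell)\}\Bigr].
\]
Since every neighbor $j'$ of $I$ lies in $\Ball_I(1)$, the bracket is now a bounded measurable function $g_2$ of $(\Ball_I(\ell+1),\ux_{\Ball_I(\ell+1)})$ alone. Applying Definition A at radius $\ell+1$ yields convergence to $\limMes_+\bigl[\tfrac{1}{k}\sum_{j'\sim\root}\ind\{\F_\root(\ell)\neq \F_{j'}(\ell)\}\bigr]$. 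By Lemma \ref{cor:q}, the limit $\limMes_+$ is a convex combination of $\nu_+$ and $\nu_-$; each of these is invariant under the stabilizer of $\root$ in $\mathrm{Aut}(\Tree_k)$, and that stabilizer acts transitively on the $k$ neighbors of $\root$. Hence every summand equals $\limMes_+(\F_\root(\ell)\neq \F_1(\ell))$, giving (\ref{eq:LimitF2}).

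The proof is essentially a direct unpacking of Definition A; the only mildly non-trivial point is the treatment of the random neighbor $J$ in (\ref{eq:LimitF2}), which we resolve by first averaging over the neighbors of $I$ (turning the event into a local function of $\Ball_I(\ell+1)$) and then invoking the rooted-automorphism symmetry of $\limMes_+$ guaranteed by Lemma \ref{cor:q}.
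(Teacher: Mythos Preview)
Your argument is correct and is essentially what the paper has in mind when it calls the lemma ``an immediate consequence of the definition of local weak convergence'': you recognize that both quantities are expectations of bounded local functionals of the rooted marked ball and then invoke Definition~A at the appropriate radius.

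One minor remark on (\ref{eq:LimitF2}). Your appeal to Lemma~\ref{cor:q} to obtain invariance of $\limMes_+$ under the stabilizer of $\root$ is valid (and there is no circularity, since Lemma~\ref{cor:q} does not use Lemma~\ref{l:FlwcLimits}), but it is not needed. The root-automorphism invariance of $\limMes_+$ is already built into its construction in Section~\ref{sec:Subsequential}: the finite-radius marginals are defined by
\[
\mu_{+,n}^t(\ux^*_{\Tree_k(t)})\;=\;\prob_n\big\{(\Ball_I(t),\ux_{\Ball_I(t)})\simeq(\Tree_k(t),\ux^*_{\Tree_k(t)})\big\},
\]
and the relation $\simeq$ identifies configurations differing by a root-preserving automorphism of $\Tree_k(t)$. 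Hence each $\mu_{+,n}^t$, and therefore each $\limMes_+^{t}$, is invariant under $\mathrm{Stab}_{\mathrm{Aut}(\Tree_k(t))}(\root)$, which acts transitively on the neighbors of $\root$. This gives $\limMes_+\bigl[\tfrac{1}{k}\sum_{j'\sim\root}\ind\{\F_\root(\ell)\neq \F_{j'}(\ell)\}\bigr]=\limMes_+(\F_\root(\ell)\neq \F_1(\ell))$ directly, matching the paper's ``immediate from the definition'' claim without invoking the structure of $\limMes_+$ as a mixture of $\nu_\pm$.
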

Now the limit quantities can be estimated as follows.
\begin{lemma}\label{lemma:Mixture}
Assume $(k-1)\tanh\beta>1$ and let
$\nu = (1-q)\nu_+ + q\nu_-$ be a mixture of the plus and minus measures
for the Ising model on $\Tree_k$. Then there exist $\delta=\delta(\beta)>0$
such that, letting $\F_i(\ell) = \F_i(\ell,\delta;\ux)$,
\begin{align}
\lim_{\ell\to\infty} \nu(\F_{\root}(\ell) = 1) & =  q\, ,\\
\lim_{\ell\to\infty} \nu(\F_{\root}(\ell) \neq \F_1(\ell) )& =  0\, .
\end{align}
\end{lemma}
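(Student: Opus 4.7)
The plan is to reduce both limits to a concentration statement for the empirical magnetization inside a ball. Let $M_+ := \nu_+(x_\root)$; in the regime $(k-1)\tanh\beta > 1$ one has $M_+ > 0$ (this is precisely the non-uniqueness criterion), and by spin symmetry $\nu_-(x_\root) = -M_+$. I would fix $\delta = \delta(\beta) := M_+/2$ and aim to show that
\[
Y_\ell := |\Ball_\root(\ell)|^{-1} \sum_{j \in \Ball_\root(\ell)} x_j
\]
converges to $M_+$ in $\nu_+$-probability, and symmetrically to $-M_+$ in $\nu_-$-probability.

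The heart of the argument is this concentration, for which I would use correlation decay of the extremal measure $\nu_+$. Writing
\[
\Var_{\nu_+}(Y_\ell) = |\Ball_\root(\ell)|^{-2} \sum_{i,j \in \Ball_\root(\ell)} \Cov_{\nu_+}(x_i, x_j),
\]
extremality of $\nu_+$ is equivalent to triviality of its tail $\sigma$-algebra, which, via the standard $L^1$ martingale argument applied to $\E_{\nu_+}[x_i \mid \mathcal{F}_{\Lambda^c}]$ for exhausting $\Lambda$, yields $\Cov_{\nu_+}(x_i, x_j) \to 0$ as $d(i,j) \to \infty$; this is uniform in $i$ because $\nu_+$ is invariant under the vertex-transitive automorphism group of $\Tree_k$, so covariances depend only on $d(i,j)$. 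For any $\eps > 0$ I would pick $R$ with $|\Cov_{\nu_+}(x_i, x_j)| \le \eps$ whenever $d(i,j) \ge R$. Splitting the double sum, the pairs with $d(i,j) < R$ contribute at most $C_R |\Ball_\root(\ell)| / |\Ball_\root(\ell)|^2 \to 0$ (each $i$ has at most $C_R = O((k-1)^R)$ close partners), while the far pairs contribute at most $\eps$. Hence $\limsup_\ell \Var_{\nu_+}(Y_\ell) \le \eps$ for every $\eps > 0$, so $Y_\ell \to M_+$ in $L^2(\nu_+)$, and symmetrically $Y_\ell \to -M_+$ in $L^2(\nu_-)$.

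To close the proof, Chebyshev's inequality combined with $\delta < M_+$ gives $\nu_+(\F_\root(\ell) = 1) = \nu_+(Y_\ell \le -\delta) \to 0$ and $\nu_-(\F_\root(\ell) = 1) \to 1$, whence $\nu(\F_\root(\ell)=1) \to (1-q)\cdot 0 + q\cdot 1 = q$, proving the first limit. For the second, automorphism invariance of $\nu_\pm$ yields the same statements with $\root$ replaced by any neighbor $1$. A union bound then gives $\nu_+(\F_\root(\ell) \ne \F_1(\ell)) \le \nu_+(\F_\root(\ell) = 1) + \nu_+(\F_1(\ell) = 1) \to 0$, and symmetrically $\nu_-(\F_\root(\ell) \ne \F_1(\ell)) \le \nu_-(\F_\root(\ell) = 0) + \nu_-(\F_1(\ell) = 0) \to 0$; averaging with weights $(1-q,q)$ yields the second limit. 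The main obstacle is the variance/correlation-decay step: on the non-amenable tree $\Tree_k$ the boundary of a ball is a constant fraction of the ball, so concentration does not follow from any standard ergodic theorem, and one must lean on extremality of $\nu_\pm$ (uniform in the basepoint via automorphism invariance) to obtain the asymptotic decorrelation needed here.
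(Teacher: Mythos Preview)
Your argument is correct and follows the same overall architecture as the paper: establish that the empirical magnetization $Y_\ell$ concentrates at $\pm\rho$ under $\nu_\pm$ via a second-moment bound, then read off both limits by Chebyshev and a union bound/translation invariance. The one substantive difference is in how the variance bound is obtained. The paper exploits the explicit structure of the Ising model on $\Tree_k$: under $\nu_+$ the spins along any path form a two-state homogeneous Markov chain, which gives an explicit exponential covariance decay $\Cov_{\nu_+}(x_j,x_{j'}) = A\,b^{d(j,j')}$ with $b\in(0,1)$, and hence $\Var_{\nu_+}(\sum_{j\in\Ball_\root(\ell)} x_j)=o(|\Ball_\root(\ell)|^2)$ directly. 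You instead invoke extremality (tail triviality) of $\nu_+$ together with automorphism invariance to get qualitative, uniform covariance decay, then split the double sum into near and far pairs. Your route is more robust (it would work for any extremal, automorphism-invariant Gibbs measure without the tree-Markov computation), while the paper's route is more elementary and gives a quantitative rate; either suffices here since only $\Var_{\nu_+}(Y_\ell)\to 0$ is needed.
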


We can now prove the following:
\begin{prop}\label{prop:2b}
Let $\{G_n\}_{n\in\naturals}$ be a sequence of $k$-regular graphs that
are $(1/2,\lambda)$ expanders for some $\lambda>0$ and
converge locally to the tree
$\Tree_k$. Then for $(k-1)\tanh\beta>1$, it holds that
$\mu_{n,+}$ converges locally on average to $\nu_+$
\end{prop}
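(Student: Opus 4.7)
The plan is to follow the three-step schema sketched at the start of Section~\ref{sec:MainProof}. I would first pass, via the diagonal construction of Section~\ref{sec:Subsequential}, to a subsequence $\{n(m)\}$ along which $\mu_{n(m),+}$ converges locally on average to some limit $\limMes_+$; Lemma~\ref{cor:q} then already puts $\limMes_+ = (1-q)\nu_+ + q\nu_-$ for some $q \in [0,1]$, so the whole task reduces to proving $q=0$. Once this is done for every subsequential limit, compactness forces $\mu_{n,+} \to \nu_+$ locally on average along the full sequence.

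To eliminate $q>0$, I fix $\delta = \delta(\beta) > 0$ as in Lemma~\ref{lemma:Mixture} and, for each configuration $\ux$, write $S(\ux) = \{i \in V_n : \F_i(\ell,\delta,\ux) = 1\}$ for the set of $-$-state vertices at scale $\ell$. Lemma~\ref{l:FlwcLimits} together with Lemma~\ref{lemma:Mixture} yields the two asymptotic identities
\begin{align*}
\lim_{\ell\to\infty}\lim_{m\to\infty} \E_{\mu_{n(m),+}}\!\left[\frac{|S(\ux)|}{n(m)}\right] &= q,\\
\lim_{\ell\to\infty}\lim_{m\to\infty} \E_{\mu_{n(m),+}}\!\left[\frac{|\partial S(\ux)|}{n(m)}\right] &= 0,
\end{align*}
where the second rests on the bookkeeping $\E_{\mu_{n,+}}[|\partial S|]/n = (k/2)\,\E_{U_n}\mu_{n,+}(\F_I(\ell) \ne \F_J(\ell))$ valid on any $k$-regular graph.

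The heart of the argument is then an isoperimetric comparison with no expectations. Lemma~\ref{l:Markov} gives the pointwise a priori estimate $|S(\ux)|/n \le 1/(1+\delta/2)$ for every $\ux$ in the support of $\mu_{n,+}$, which is the same as $|S^c(\ux)|/n \ge \eta := \delta/(2+\delta) > 0$. The $(1/2,\lambda)$-expansion assumption gives $|\partial S| \ge \lambda \min(|S|,|S^c|)$ for every vertex set. Combining these in the two cases $|S| \le n/2$ (where $|\partial S| \ge \lambda|S| \ge \lambda \eta |S|$) and $|S| > n/2$ (where $|\partial S| = |\partial S^c| \ge \lambda|S^c| \ge \lambda \eta n \ge \lambda \eta |S|$) produces the uniform pointwise inequality
\[
\frac{|\partial S(\ux)|}{n} \;\ge\; \lambda \eta \,\frac{|S(\ux)|}{n}.
\]
Taking $\mu_{n(m),+}$-expectation on both sides and then passing to the double limit in $m$ and $\ell$ collapses this display to $0 \ge \lambda \eta q$, forcing $q=0$ as desired.

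The main obstacle is precisely the isoperimetric step: each of Lemmas~\ref{l:Markov}, \ref{l:FlwcLimits}, and \ref{lemma:Mixture} is already in hand, but they must be wedded to the edge-expansion assumption through a pointwise bound that is uniform in $\ux$ (so that $\mu_{n,+}$-averaging is legitimate), and the order of limits in $m$ and $\ell$ has to be chosen with some care so that Lemma~\ref{l:Markov}, which holds at each fixed $\ell$ for $n$ large enough, can be invoked uniformly before Lemma~\ref{lemma:Mixture} is applied in the final $\ell \to \infty$ limit. The rest of the work is the diagonal passage to a convergent subsequence, which is already routine by Section~\ref{sec:Subsequential}.
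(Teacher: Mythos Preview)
Your proposal is correct and follows essentially the same approach as the paper: pass to a subsequential limit, invoke Lemma~\ref{cor:q} to write $\limMes_+=(1-q)\nu_++q\nu_-$, and then combine the a~priori bound of Lemma~\ref{l:Markov} with the $(1/2,\lambda)$ edge-expansion to obtain a pointwise inequality $|\partial S|\ge \lambda\eta|S|$ (with $\eta=\delta/(2+\delta)$) whose averaged, limiting form forces $q=0$. The only cosmetic differences are that the paper packages the limit as an $\varepsilon$-argument rather than your double $\lim_{\ell}\lim_{m}$, and writes the isoperimetric step as a single chain of inequalities rather than your case split on $|S|\lessgtr n/2$; the content is identical.
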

\begin{proof}
Let $n(m)$ be a subsequence along which $\mu_{n,+}$ converges locally on average to some $\limMes_+$.  By Lemma~\ref{cor:q} we can write this in the form $\limMes_+ = (1-q)\, \nu_+ + q\, \nu_-$.  Then by Eqs.~(\ref{eq:LimitF1}), (\ref{eq:LimitF2}), for any $\ve>0$,
there exists $\ell$, such that for large enough $n(m)$,
\begin{eqnarray}
\E \,\mu_{n(m),+}(\F_I(\ell))&\ge &q-\ve\, ,\label{eq:EpsClose1}\\
\E\,\mu_{n(m),+}(\ind\{\F_I(\ell)\neq \F_J(\ell)\})& \le & \ve\, .
\label{eq:EpsClose2}
\end{eqnarray}
On the other hand, since $G_n$ is a $(1/2,\lambda)$ expander,
and using Eq.~(\ref{eq:APriori}), we have
\begin{eqnarray}
\sum_{(i,j)\in E_n}\ind\{\F_i(\ell)\neq \F_j(\ell)\}& \ge &
\lambda\, \min(\sum_{i\in V_n}\F_i(\ell),
\sum_{i\in V_n}(1-\F_i(\ell)))\\
& \ge &\lambda\min(\sum_{i\in V_n}\F_i(\ell),
n\delta/(2+\delta)) \\
&\ge & \frac{\lambda\delta}{2+\delta} \sum_{i\in V_n}\F_i(\ell)\, .
\end{eqnarray}
Recalling (\ref{eq:EpsClose1}), (\ref{eq:EpsClose2}),
taking expectation of both sides with respect to $\mu_{n,+}$
and representing the sums over $E_n$, $V_n$ as expectations, we get
\begin{eqnarray}
\frac{k}{2} \ve \geq
\frac{k}{2}\, \E\mu_{n(m),+}(\ind\{\F_I(\ell)\neq \F_J(\ell)\})
\ge \frac{\lambda\delta}{2+\delta} \E \mu_{n(m),+}(\F_I(\ell))\ \geq
\frac{\lambda\delta}{2+\delta} (q-\ve).
\end{eqnarray}
%
Since $\ve>0$ is arbitrary, we derive a contradiction unless $q=0$.
The proof follows.
\end{proof}

We can now complete the proof of Theorem \ref{thm:Main}.
\begin{proof}[Proof (Theorem \ref{thm:Main}, part II)]
Let $n(m)$ be a subsequence along which the local distributions of $\mu_{n,+}$ converge locally  to some $\mathfrak{m}$ (by
the same compactness arguments used in the previous section,
one always exists).  Now by Proposition \ref{prop:2b} it follows that $\nu_+=\int_{\mathcal{G}_k} \nu \ \mm(\de\nu)$ which implies that $\mathfrak{m}$ is a point measure on $\nu_+$ since it is extremal.  This implies local convergence in probability to $\nu_+$, which completes the proof.
\end{proof}
%
%
\section{Proofs of Lemmas}\label{sec:Proofs}

%
%
\subsection{Proof of Lemma \ref{lemma:Gibbs}}
\label{sec:ProofGibbs}

We start from a very general remark, which is implicit in~\cite{DobrushinTirozzi}
holding
for a general Markov random field on a graph $G=(V,E)$
\begin{eqnarray}
\mu(\ux) = \frac{1}{Z}\prod_{(i,j)\in E}\psi_{i,j}(x_i,x_j)
\end{eqnarray}
where $\ux=\{x_i\}_{i\in V}\in\cX^V$ for a finite spin alphabet $\cX$,
and $\psi_{ij}:\cX\times \cX\to\reals$ is a collection of potentials.
Recall that a subset $S$ of the vertices of $G$
is  an independent set  if, for any $i,j\in S$, $(i,j)\not\in E$.
\begin{lemma}\label{lemma:CLT}
Assume $0<\psi_{\rm min}\le \psi_{ij}(x_i,x_j)\le \psi_{\rm max}$,
let $k$ be the maximum degree of $G$, and $I(G)$ the maximum size
of an independent set of $G$. Then there exists a constant
$C = C(k,\psi_{\rm max}/\psi_{\rm min})>0$ such that,
for any $x\in\cX$ and any $\ell\in\naturals$,
\begin{eqnarray}
\mu\Big(\sum_{i\in V}\ind_{x_i=x}=\ell\Big)\le \frac{C}{\sqrt{I(G)}}\, .
\end{eqnarray}
\end{lemma}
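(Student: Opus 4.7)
The approach will exploit the independent set structure to reduce anti-concentration for $\sum_i \ind_{x_i = x}$ to a standard anti-concentration estimate for a sum of independent Bernoulli variables. Concretely, let $S \subseteq V$ be a maximum independent set, so $|S| = I(G)$, and write the quantity of interest as $W + Y$, where $W = \sum_{i \in S} \ind_{x_i = x}$ and $Y = \sum_{i \in V \setminus S} \ind_{x_i = x}$.

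The first step is to condition on the configuration $\ux_{V \setminus S}$. Because $S$ is independent, no potential $\psi_{ij}$ couples two variables of $S$, and therefore the conditional law of $\{x_i : i \in S\}$ factorizes: the $x_i$, $i \in S$, are conditionally independent, each with marginal
\begin{eqnarray}
\mu(x_i = y \mid \ux_{V \setminus S}) \;\propto\; \prod_{j : (i,j) \in E} \psi_{ij}(y, x_j).
\end{eqnarray}
The ratio of any two such conditional marginal weights is a product of at most $k$ factors, each bounded between $\psi_{\min}/\psi_{\max}$ and $\psi_{\max}/\psi_{\min}$. Thus there is $p_0 = p_0(k, \psi_{\max}/\psi_{\min}) \in (0, 1/2]$ such that, for every $i \in S$,
\begin{eqnarray}
p_0 \;\le\; \mu(x_i = x \mid \ux_{V \setminus S}) \;\le\; 1 - p_0.
\end{eqnarray}

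Next, given $\ux_{V \setminus S}$ the random variable $Y$ is deterministic and $W$ is a sum of $|S| = I(G)$ independent Bernoulli variables, each with parameter in $[p_0, 1-p_0]$. A standard anti-concentration inequality of Kolmogorov–Rogozin type (or equivalently the local CLT for Bernoulli sums) then gives, for every integer $w$,
\begin{eqnarray}
\mu(W = w \mid \ux_{V \setminus S}) \;\le\; \frac{C'}{\sqrt{I(G)}},
\end{eqnarray}
where $C' = C'(p_0)$. Averaging over $\ux_{V \setminus S}$ with $w = \ell - Y$ yields the claimed estimate with $C = C'(p_0(k, \psi_{\max}/\psi_{\min}))$.

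The main point requiring care is the uniform two-sided bound on the conditional marginals: it uses both the uniform bounds on $\psi_{ij}$ and the maximum degree $k$, but nothing else about $G$, so the constant $C$ will depend only on $k$ and the ratio $\psi_{\max}/\psi_{\min}$, as required. The anti-concentration step for independent Bernoullis with parameters in a compact subinterval of $(0,1)$ is classical and can simply be cited; no additional structural hypothesis on $G$ beyond the existence of an independent set of size $I(G)$ enters.
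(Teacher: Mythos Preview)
Your proof is correct and is essentially identical to the paper's: both take a maximum independent set $S$, condition on $\ux_{V\setminus S}$, observe that the spins in $S$ become independent Bernoullis with parameters uniformly bounded away from $0$ and $1$ (via the degree bound and the ratio $\psi_{\max}/\psi_{\min}$), and then invoke a standard anti-concentration estimate for the resulting Bernoulli sum. The only cosmetic difference is that the paper cites Berry--Esseen while you cite Kolmogorov--Rogozin/local CLT; either yields the $C/\sqrt{I(G)}$ bound.
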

\begin{proof}
Let $S$ be a maximum size independent set and $\oS = V\setminus S$
its complement. Further, let
$Y_U\equiv \sum_{i\in U}\ind_{x_i=x}$ for $U\subseteq V$.
Conditioning on $\ux_{\oS} = \{x_i:\, i\in\oS\}$
\begin{eqnarray}
\mu\Big(\sum_{i\in V}\ind_{x_i=x}=\ell\Big) =
\E_{\mu}\Big\{\mu\Big(Y_S=\ell-Y_{\oS}|\ux_{\oS}\Big)\Big\}\, .
\end{eqnarray}
Conditional on $\ux_{\oS}$, the variables $\{x_i\}_{i\in S}$
are independent with $\delta\le \mu(x_i=x|\ux_{\oS})\le 1-\delta$
for some $\delta>0$ depending on $k$ and $\psi_{\rm max}/\psi_{\rm min}$.
As a consequence $Y_S$ is the sum of $|S| = I(G)$ independent
Bernoulli random variables with expectation bounded away from $0$ and 1.
By the Berry-Esseen Theorem
\begin{eqnarray}
\mu\Big(Y_S=\ell-Y_{\oS}|\ux_{\oS}\Big)\le \frac{C}{\sqrt{I(G)}},
\end{eqnarray}
which implies the thesis.
\end{proof}

\begin{proof}(Lemma \ref{lemma:Gibbs})
Recall that for $\Tree_k$, the infinite rooted $k$-regular tree,
we denote by $\Tree_k(t)$ the subtree induced by nodes
with distance at most $t$ from the root $\root$.
Also, denote $\Tree_k(t,t_+)=\Tree_k(t_+)\setminus \Tree_k(t)$, the subgraph
induced by nodes $i$ with distance $t+1\le d(i,\root)\le t_+$.
Let $\limMes_+$ denote a subsequential limit of the measures $\mu_{n,+}$ constructed as in Section
\ref{sec:MainProof}. For any $t\ge 1$ and $t_+>t$ we will prove that
the conditional distribution of $\ux_{\Tree_k(t)}$ given
$\ux_{\Tree_k(t,t_+)}$ is given by
(here and below we adopt the  convention of writing
$p(x|y) \normeq f(x,y)$ for a conditional distribution $p$,
whenever  $p(x|y) = f(x,y)/\sum_{x'}f(x',y)$):
\begin{eqnarray}
\limMes_{+}^{\Tree_k(t)|\Tree_k(t,t+)}(\ux_{\Tree_k(t)}|\ux_{\Tree_k(t,t_+)})
\normeq \exp\left\{
\beta\sum_{(i,j)\in E(\Tree_k(t+1))}x_ix_j\right\}\, .
\label{eq:ClaimConditional}
\end{eqnarray}
This establishes the DLR conditions and implies that $\limMes_{+}$ is a Gibbs measure as required.

In analogy with the notation introduced above
(and recalling that $\Ball_i(t)$ is the ball of radius $t$
around vertex $i$ in $G_n$), we let $\Ball_i(t,t_+)=\Ball_i(t_+)\setminus
\Ball_i(t)$ be the
subgraph induced by vertices $j$ such that $t+1\le d(i,j)\le t_+$.
Also $\Ell_i(t)$ will be the set of edges in $\Ball_i(t)$,
and $\cEll_i(t) = E_n\setminus\cEll_i(t)$.
The marginal distribution of $\ux_{\Ball_i(t_+)}$ under $\mu_{n,+}$
is given by
\begin{eqnarray}
\mu_{n,+}^{t_+}(\ux_{\Ball_i(t_+)})
& \normeq & F_{\Ball_i(t_+)}(\ux_{\Ball_i(t_+)})\; Z_{\Ball_i(t_+)}(\ux_{\Ball_i(t_+)})
\, \\
F_{\Ball_i(t_+)}(\ux_{\Ball_i(t_+)})
&\equiv &\exp\Big\{\beta\!\!\sum_{(l,j)\in\Ell_i(t_+)}\!\! x_lx_j\Big\}\;
\, ,\\
Z_{\Ball_i(t_+)}(\ux_{\Ball_i(t_+)}) &\equiv &
\sum_{\ux_{V_n\setminus\Ball_i(t_+)}}\exp\Big\{\beta\!\!
\sum_{(l,j)\in\cEll_i(t_+)}x_l x_j
\Big\}\; \ind\Big(\sum_{j\in\cBall_i(t_+)}x_j> -\sum_{j\in\Ball_i(t_+)}x_j\Big)\, .
\end{eqnarray}
We, therefore, have the following expression for the conditional
distribution of $\ux_{\Ball_i(t)}$, given $\ux_{\Ball_i(t,t_+)}$:
\begin{eqnarray}
\mu_{n,+}^{\Ball_i(t_+)|\Ball_i(t,t_+)}(\ux_{\Ball_i(t_+)}|\ux_{\Ball_i(t,t_+)})
= \frac{ F_{\Ball_i(t_+)}(\ux_{\Ball_i(t_+)})\; Z_{\Ball_i(t_+)}(\ux_{\Ball_i(t_+)})}
{\sum_{\ux_{\Ball_i(t)}}F_{\Ball_i(t_+)}(\ux_{\Ball_i(t_+)})\;
Z_{\Ball_i(t_+)}(\ux_{\Ball_i(t_+)})}\, .\label{eq:ConditionalProbability}
\end{eqnarray}
On the other hand we have
$Z^{-}_{\Ball_i(t_+)}(\ux_{\Ball_i(t_+)})\le
Z_{\Ball_i(t_+)}(\ux_{\Ball_i(t_+)})\le Z^{+}_{\Ball_i(t_+)}(\ux_{\Ball_i(t_+)})$
where we define
\begin{eqnarray}
Z_{\Ball_i(t_+)}^{\pm}(\ux_{\Ball_i(t_+)}) &\equiv &
\sum_{\ux_{V_n\setminus\Ball_i(t_+)}}\exp\Big\{\beta\!\!
\sum_{(l,j)\in\cEll_i(t_+)}x_ix_j
\Big\}\; \ind\Big(\sum_{j\in\cBall_i(t_+)}x_j> \mp |\Ball_i(t_+)|\Big)\, .
\end{eqnarray}
Notice that $Z_{\Ball_i(t_+)}^{\pm}(\ux_{\Ball_i(t_+)})$
depend on $\ux_{\Ball_i(t_+)}$ only through  $\ux_{\Ball_i(t,t_+)}$.
Using the expression (\ref{eq:ConditionalProbability})
for the conditional probability (and dropping subscripts on $\mu$
to lighten the notation), we have
\begin{align}
\mu_{n,+}(\ux_{\Ball_i(t_+)}|\ux_{\Ball_i(t,t_+)})
&\le \mu^{*}(\ux_{\Ball_i(t_+)}|\ux_{\Ball_i(t,t_+)})
\;   \max_{\ux\in \{+1,-1\}^{\Tree_k(t,t_+)} }\frac{Z_{\Ball_i(t_+)}^{+}(\ux)}
{Z_{\Ball_i(t_+)}^{-}(\ux)},\\
\mu_{n,+}(\ux_{\Ball_i(t_+)}|\ux_{\Ball_i(t,t_+)})
&\ge \mu^{*}(\ux_{\Ball_i(t_+)}|\ux_{\Ball_i(t,t_+)}) \min_{\ux\in \{+1,-1\}^{\Tree_k(t,t_+)} } \frac{Z_{\Ball_i(t_+)}^{-}(\ux)}
{Z_{\Ball_i(t_+)}^{+}(\ux)}\, ,
\end{align}
with
\begin{eqnarray}
\mu^{*}(\ux_{\Ball_i(t_+)}|\ux_{\Ball_i(t,t_+)})
\normeq \exp\Big\{
\beta\sum_{(l,j)\in \Ell_i(t+1)}x_lx_j\Big\}\, .
\end{eqnarray}
The claim (\ref{eq:ClaimConditional}) thus follows
from the fact that $\Ball_i(t_+)\simeq \Tree_k(t_+)$ with probability
going to $1$ as $n\to\infty$, if we can show
that
\begin{equation}\label{e:conditioningLimit}
\frac{Z_{\Ball_i(t_+)}^{-}(\ux)}
{Z_{\Ball_i(t_+)}^{+}(\ux)} \to 1
\end{equation}
for all $\ux\in \{+1,-1\}^{\Tree_k(t,t_+)}$ as $n\to\infty$.

Let $\hmu$ denote the Ising measure on $\ux_{\cBall_i(t_+)}$
with boundary conditions $\ux_{\Ball_i(t_+)}$
\begin{eqnarray}
\hmu(\ux_{\cBall_i(t_+)})= \frac1{\hat Z(\ux_{\Ball_i(t_+)})} \exp\Big\{\beta\!\!
\sum_{(l,j)\in\cEll_i(t_+)}x_ix_j
\Big\}\, .
\end{eqnarray}
Now
\[
1- \frac{Z_{\Ball_i(t_+)}^{-}(\ux)}
{Z_{\Ball_i(t_+)}^{+}(\ux)}  = \frac{\hmu\Big(\sum_{j\in\cBall_i(t_+)}x_j> - |\Ball_i(t_+)|\Big) - \hmu\Big(\sum_{j\in\cBall_i(t_+)}x_j>  |\Ball_i(t_+)|\Big)}{\hmu\Big(\sum_{j\in\cBall_i(t_+)}x_j> - |\Ball_i(t_+)|\Big)}\, .
\]
Observe that by the Gibbs construction of $\mu$ for any $\ux_{\cBall_i(t_+)}$, we have that
\begin{align*}
\hmu(\ux_{\cBall_i(t_+)}) &\geq \exp(-2\beta k |\Ball_i(t^+)|)\mu_n(\ux_{\cBall_i(t_+)})
\end{align*}
as this is the maximum affect that conditioning on a set of size $|\Ball_i(t^+)|$ can have on the measure $\mu$.  By symmetry of the measure $\mu_n$ with respect to the sign of $\ux$,
\begin{align}\label{e:positiveProb}
\hmu\Big(\sum_{j\in\cBall_i(t_+)}x_j> - |\Ball_i(t_+)|\Big) &\geq \hmu\Big(\sum_{j\in\cBall_i(t_+)}x_j \geq 0\Big)\nonumber\\
&\geq \exp(-2\beta k |\Ball_i(t^+)|)\mu_n\Big(\sum_{j\in\cBall_i(t_+)}x_j \geq 0\Big)\nonumber\\
&\geq \frac12  \exp(-2\beta k |\Ball_i(t^+)|).
\end{align}
Now applying Lemma \ref{lemma:CLT} to the measure $\hmu$ we have that
\begin{align}\label{e:balancedProb}
\hmu\Big(\sum_{j\in\cBall_i(t_+)}x_j> - |\Ball_i(t_+)|\Big) - \hmu\Big(\sum_{j\in\cBall_i(t_+)}x_j>  |\Ball_i(t_+)|\Big) &= \hmu\Big(\big|\sum_{j\in\cBall_i(t_+)}x_j\big| \leq |\Ball_i(t_+)|\Big) \nonumber\\
&\leq \frac{2C|\Ball_i(t_+)|}{\sqrt{n - |\Ball_i(t_+)|}} \to 0
\end{align}
for some $C=C(k,\beta)$ as $n\to \infty$.
Combining equations \eqref{e:balancedProb} and \eqref{e:positiveProb} we establish equation \eqref{e:conditioningLimit} which completes the proof.
\end{proof}
%
%
\subsection{Proof of Lemma \ref{lemma:EnergyLimit}}
\label{sec:ProofEnergyLimit}

For the convenience of the reader, we restate the main result of \cite{DemboMontanariIsing}
in the case of $k$-regular graphs, with no magnetic field $B$.
This provides an asymptotic
estimate of the partition function
\begin{eqnarray}
Z_n(\beta) = \sum_{\ux}\exp\Big\{\beta\sum_{(i,j)\in E}x_ix_j
+\sum_{i\in V}x_i\Big\}\, .
\end{eqnarray}
\begin{thm}
Let $\{G_n\}_{n\in\naturals}$ be a sequence of graphs
that converges locally to the $k$-regular tree $\Tree_k$.
For $\beta>0$, let $h$ be the largest solution
of
\begin{eqnarray}
h = (k-1) \tanh[\tanh(\beta) \tanh (h)]\, .\label{eq:TreeFixedPoint}
\end{eqnarray}
Then $\lim_{n\to\infty}\frac{1}{n}\log Z_n = \phi(\beta)$, where
\begin{eqnarray}\label{eqn:phi}
\phi(\beta) &\equiv& \frac{k}{2}\, \log\cosh(\beta) -
\frac{k}{2}\,\log\{1+\tanh(\beta) \tanh(h)^2\}
\nonumber
\\
&+& \log\Big\{ [1+\tanh(\beta) \tanh(h)]^k +
[1 -\tanh(\beta) \tanh(h) ]^k \Big\}\, ,
\end{eqnarray}
\end{thm}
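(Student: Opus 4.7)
I would proceed via the thermodynamic derivative identity
\[
\frac{d}{d\beta}\frac{1}{n}\log Z_n(\beta)=\frac{k}{2}\,\E_{U_n}\mu_n(x_Ix_J),
\]
where $(I,J)$ is a uniformly random directed edge obtained by picking $I\sim U_n$ and then a uniform neighbour $J$. Since $\frac{1}{n}\log Z_n(0)=\log 2$, it suffices to identify the $n\to\infty$ limit of the correlation for every $\beta>0$ and interchange the limit with integration over $\beta$; this is legal because the integrand is uniformly bounded by $1$ so dominated convergence applies.

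\textbf{Identifying the correlation limit.} The claim to establish is
$\E_{U_n}\mu_n(x_Ix_J;\beta)\to\nu_+(x_\root x_1;\beta)$
(and the same value for $\nu_-$ by the $x\mapsto -x$ symmetry). To avoid the complications of non-uniqueness on $\Tree_k$ in the low-temperature regime, I would introduce a small external field $B>0$, so that on $\Tree_k$ Griffiths--GKS-type monotonicity forces a unique Gibbs measure equal to $\nu_+(\beta,B)$. A local-weak-limit argument along the lines of Sections \ref{sec:Subsequential}--\ref{sec:ProofGibbs}---extract a subsequential limit of the marginals $\mu_{n,B}^t$, verify it is a tree Gibbs measure, and invoke uniqueness---gives $\E_{U_n}\mu_n(x_Ix_J;\beta,B)\to\nu_+(x_\root x_1;\beta,B)$. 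Integrating in $\beta$ produces $\phi(\beta,B)$ for $B>0$, and continuity in $B$, which follows from the uniform Lipschitz bound $|\partial_B\tfrac{1}{n}\log Z_n|\le 1$, lets me pass $B\to 0^+$ to obtain $\phi(\beta)=\phi(\beta,0)$.

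\textbf{Closed form.} To check that the integrated expression coincides with (\ref{eqn:phi}), I would use the Belief Propagation recursion on $\Tree_k$: the effective cavity field $h$ attached to each directed edge satisfies the fixed point equation (\ref{eq:TreeFixedPoint}), and the cavity formula yields a clean expression for $\nu_+(x_\root x_1;\beta)$ in terms of $h$ and $\tanh\beta$. The verification is then a one-variable calculus exercise: differentiate (\ref{eqn:phi}) in $\beta$ using the implicit function theorem for $h(\beta)$, and check term-by-term that $\phi'(\beta)=(k/2)\nu_+(x_\root x_1;\beta)$ with $\phi(0)=\log 2$. The factor $1+\tanh^2\beta\tanh^2 h$ in the middle term of (\ref{eqn:phi}) is exactly what absorbs the cross terms generated by differentiating the fixed point relation.

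\textbf{Main obstacle.} The hardest step is the correlation limit at low temperature. A direct application of local weak convergence at $B=0$ leaves open the possibility that subsequential limits include extremal Gibbs measures other than $\nu_\pm$ with strictly smaller edge correlation, which would give a wrong answer. The $B>0$ detour circumvents this, at the price of needing some care across the critical curve $(k-1)\tanh\beta=1$, where $h(\beta)$ is non-smooth: one must argue that the convergence to $\phi(\beta,B)$ is uniform enough in $B$ near $B=0$ to let the $B\to 0^+$ limit commute with the $\beta$ integration through the transition point.
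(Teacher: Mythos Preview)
The paper does not actually prove this theorem: it is quoted verbatim from \cite{DemboMontanariIsing} (``For the convenience of the reader, we restate the main result of \cite{DemboMontanariIsing}\ldots''). So there is no in-paper proof to compare against. In fact, within this paper the logical flow runs in the \emph{opposite} direction to your proposal: the free-energy limit is taken as input, and the edge-correlation limit of Lemma~\ref{lemma:EnergyLimit} is \emph{deduced} from it via convexity of $\beta\mapsto\frac{1}{n}\log Z_n(\beta)$ (convex functions converging pointwise have derivatives converging at points of differentiability of the limit).

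That said, your sketch is essentially the strategy of \cite{DemboMontanariIsing} and is sound. The $B>0$ detour is exactly the right device: for $B>0$ the Ising Gibbs measure on $\Tree_k$ is unique, so the subsequential-limit argument pins down the correlation without any ambiguity among extremal states; dominated convergence handles the $\beta$-integration; and the uniform Lipschitz bound $|\partial_B\frac{1}{n}\log Z_n|\le 1$ lets you exchange $n\to\infty$ and $B\to 0^+$ cleanly. Your ``main obstacle'' paragraph correctly identifies why a direct $B=0$ attack on the correlation would be circular in the context of this paper---that is precisely what Lemmas~\ref{lemma:EnergyLimit} and~\ref{lemma:Extremal} are for, and they rely on the free-energy theorem you are trying to prove. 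One small refinement: you do not really need uniformity of the $B\to 0^+$ limit ``through the transition point'' for the $\beta$-integration, since the Lipschitz-in-$B$ argument operates at fixed $\beta$ and gives $\lim_n\frac{1}{n}\log Z_n(\beta,0)=\lim_{B\downarrow 0}\phi(\beta,B)$ directly; the non-smoothness of $h(\beta)$ at criticality only enters when you verify the closed form~(\ref{eqn:phi}), where it is harmless because $\phi$ itself remains $C^1$.
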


For the proof of Lemma \ref{lemma:EnergyLimit} we start by noticing that, by
symmetry under change of sign of the $x_i$'s,
we have $\mu_{n,+}(x_i\cdot x_j)=  \mu_n(x_i \cdot x_j)$.
Simple calculus yields
\begin{eqnarray}
\frac{1}{n}\,\frac{\partial\phantom{\beta}}{\partial\beta}
\log Z_n(\beta) = \frac{1}{n}\sum_{(i,j)\in\E_n}\mu_n (x_i\cdot x_j) =
\frac{k}{2}\, \E \mu_n(x_I \cdot x_J) \, ,
\end{eqnarray}
where the expectation $\E$ is taken with respect to $I$
uniformly random vertex, and $J$ one of its neighbors taken
uniformly at random.

On the other hand, differentiating Eq.~(\ref{eqn:phi}) with respect to
$\beta$, and using the fixed point condition (\ref{eq:TreeFixedPoint}),
we get after some algebraic manipulations
\begin{eqnarray}
\frac{\partial\phantom{\beta}}{\partial\beta}\phi(\beta)
= \frac{k}{2}\,
\frac{\tanh\beta+(\tanh h)^2}{1+\tanh\beta(\tanh h)^2}
= \frac{k}{2} \nu_+(x_{\root}\cdot x_1)\, .
\end{eqnarray}
The last identification comes from the fact that the
joint distribution of $x_{\root}$ and $x_1$ on a $k$-regular
tree under the plus-boundary Gibbs measure is
$\nu_+(x_{\root},x_1) \propto \exp\{\beta x_{\root}x_1+hx_{\root}+hx_1\}$
(see \cite{DemboMontanariIsing}).

Further $\beta\mapsto \frac{1}{n}\log Z_n(\beta)$ is convex
because its second derivative is proportional to the variance
of $\sum_{(i,j)}x_ix_j$ with respect to the measure $\mu_n$.
Therefore, its derivative $(k/2)\E \mu_n(x_i \cdot x_j)$ converges
to $(k/2)\nu_+(x_{\root}\cdot x_1)$ for a dense subset of values of
$\beta$. Since the limit $\beta\mapsto  \nu_+(x_{\root}\cdot x_1)$
is continuous, convergence takes place for every $\beta$.
%
%
\subsection{Proof of Lemma \ref{lemma:Extremal}}
\label{sec:LemmaExtremal}

Recalling that $\Tree_k$ denotes the infinite $k$-regular tree rooted
at $\root$ let $\Tree^{\root}$ and $\Tree^1$ be the subtrees
obtained by removing the edge $(\root,1)$ where $1$ is a neighbor of $\root$.
It is sufficient to prove the claim when $\nu$ is an extremal Gibbs
measure on $\Tree_k$ since of course we may decompose any Gibbs
measure into a mixture of extremal measures.
For $i\in\{\root,1\}$ define
\[
m_i^{\nu} = \lim_{\ell\to\infty}\E_{\Tree^i}(x_i\mid
\ux_{\Ball_i^c(\ell)\cap \Tree^i})
\]
where $\E_{\Tree^i}$ denotes expectation with respect to the
Ising model on the tree $\Tree_i$ and the boundary condition
$\ux_{\Ball_i(\ell)\cap \Tree^i}$ is chosen according to $\nu$.
The limit exists by the
Backward Martingale Convergence Theorem. Further it is a constant
almost surely, because it is measurable with respect to
the tail $\sigma$-field, and
$\nu$ is extremal.

By the monotonicity of the Ising model if $\nu \preceq \nu'$,
then $m_i^\nu\leq m_i^{\nu'}$.  Furthermore
\begin{equation}\label{e:rootField}
\nu(x_{\root})= \frac{m^{\nu}_{\root}+\tanh(\beta)m_1^{\nu}}
{1+\tanh(\beta)m^{\nu}_{\root} m_{1}^{\nu}}\, .
\end{equation}
Now if $\nu\neq\nu^+$ then $\nu(x_{\root}=1)<\nu^+(x_{\root}=1)$.  Under
the plus measure $m_{\root}^{\nu_+}=m_{1}^{\nu_+}=m^+$ which by the
monotonicity of the system is the maximal such value.  Since
the right hand side of Eq.~\eqref{e:rootField} is increasing in $m_{\root},m_1$ it follows that $m_{\root}^{\nu}=m_{1}^{\nu}=m^+$ if and only if
$\nu=\nu^+$.

An easy tree calculation shows that the expectation of $x_{\root}\cdot x_1$
is
\begin{align*}
\nu(x_{\root}\cdot x_1) = \frac{\tanh(\beta)+m^{\nu}_{\root}m^{\nu}_1}
{1+\tanh(\beta)m^{\nu}_{\root}m^{\nu}_1}\, .
\end{align*}
which is strictly increasing in $m^{\nu}_{\root}$ when $m^{\nu}_1>0$.
By symmetry it is also strictly increasing in $m^{\nu}_1$ when $m^{\nu}_{\root}
>0$.  Hence amongst measures $\nu$ with $m^{\nu}_{\root}\geq 0$,
the expectation $\nu(x_{\root}\cdot x_1)$ is uniquely maximized when
$m_{\root}^{\nu}=m_{1}^{\nu}=m^+$, that is when $\nu=\nu^+$.  Similarly amongst measures $\nu$ with $m^\nu_{\root}\leq 0$ the agreement probability is uniquely maximized by $\nu_-$, which completes the proof.

%
%
\subsection{Proof of Lemma \ref{l:Markov}}
\label{sec:Markov}

Observe first by the local weak convergence of the graphs $\{G_n\}$ that
all but $o(n)$ vertices appear in $|\Ball_i(\ell)|$ balls
$\Ball_i(\ell)$.  Hence given a configuration $\ux$ with
$\sum_ix_i\ge 0$, we have
\begin{eqnarray}
\sum_{i\in V_n}\left(\frac{1}{|\Ball_i(\ell)|}\sum_{j\in\Ball_i(\ell)}x_j\right)
\ge -o(n)\, .
\end{eqnarray}
By Markov's inequality (applied to the uniform choice of $i\in V_n$)
we have
\begin{eqnarray}
\frac{1}{n}\sum_{i\in V_n}\F_i(\ell)\le \frac{1}{1+\delta}+o_n(1)
\le \frac{1}{1+\delta/2}\, ,
\end{eqnarray}
where the second inequality holds for all $n$ large enough

%
%
\subsection{Proof of Lemma \ref{lemma:Mixture}}

Setting $\rho=\nu_+(x_{\root})$ note that by invariance of
$\nu_+$ under graph homomorphisms of $\Tree_k$, we have
\[
\nu_+\bigg( \sum_{j\in\Ball_i(\ell)}x_j \bigg) = \rho\left|\Ball_i(\ell)\right|.
\]
Moreover, under $\nu_+$, along any path of vertices in $\Tree_k$ the states are distributed as a 2-state homogenous Markov chain and hence
\[
\nu_+(x_j \cdot x_{j'})-
\nu_+(x_j)\nu_+(x_j') = A\, b^{d(j,j')}
\]
where $d(j,j')$ is the graph distance between vertices $i$ and $j$,
and $b\in (0,1)$ is a constant depending on $\beta$.

This in particular implies that
\[
\mathrm{Var}_{\nu_+}\bigg( \sum_{j\in\Ball_i(\ell)}x_j \bigg) = o\bigg(\Big|\Ball_i(\ell)\Big|^2\bigg)\, ,
\]
and therefore, using Chebychev inequality,
$\frac1{\Ball_i(\ell)}\sum_{j\in\Ball_i(\ell)}x_j$ converges in probability
to $\rho$ as $\ell\to\infty$.  Similarly under the measure $\nu_-$ we have that $\frac1{\Ball_i(\ell)}\sum_{j\in\Ball_i(\ell)}x_j$ converges in probability to $-\rho$.  Now taking $0<\delta<\rho$ we have that
\begin{align*}
\lim_{\ell\to\infty} \nu_+(\F_{\root}(\ell) = 1) & =  0\, ,\\
\lim_{\ell\to\infty} \nu_-(\F_{\root}(\ell) = 1 )& =  1\, .
\end{align*}
Therefore, for $\nu = (1-q)\nu_+ + q\nu_-$,
we have $\nu_+(\F_{\root}(\ell) = 1)\to q$.

Moreover, by translation invariance
\begin{align*}
\nu_+(\F_{\root}(\ell)\neq \F_{1}(\ell)) =
2\nu_+(\F_{\root}(\ell)=1, \F_{1}(\ell)=0) \le
2\nu_+(\F_{\root}(\ell)=1)\to 0\, .
\end{align*}
By applying the same argument to $\nu_-$, we deduce
that the probability that $\F_{\root}(\ell)$ and $\F_1(\ell)$ differ goes to
$0$ under any mixture of $\nu_+$ and $\nu_-$.
Since $\nu$ is a mixture of $\nu_+$ and $\nu_-$ this completes the lemma.
%
%
\section{Proof of Theorem \ref{thm:Consequence}}
\label{sec:Consequence}

To simplify notation we will write $f_i$ or $f_i(\ux)$ for $f_{i,n}(\ux_{\Ball_i(\ell)})$.
We will prove that, denoting by $\Var_{n,+}$, $\Cov_{n,+}$
variance and covariance under $\mu_{n,+}$,
\begin{eqnarray*}
\lim_{n\to\infty}\Var_{n,+}\Big(\frac{1}{n}\sum_{i\in V_n}f_i(\ux_{\Ball_i(\ell)})\Big)
= \lim_{n\to\infty}
\E_{U_n}\Cov_{n,+}(f_I(\ux_{\Ball_I(\ell)}),f_L(\ux_{\Ball_L(\ell)})) = 0\, .
\end{eqnarray*}
Here $\E_{U_{n}}$ denotes
expectation with respect to two independent and uniformly
random vertices $I,L$ in $V_n$. The thesis then follows by Chebyshev
inequality.

Since the $f_i$'s are bounded, we have for $r > \ell$,
\begin{eqnarray*}
\E_{U_n}\Cov_{n,+}(f_I,f_L) \le \prob_{U_n}(d(I,L) \leq 2 r)
+\E_{U_n}\Big\{\Cov_{n,+}(f_I,f_L) ; d(I,L) > 2 r \Big\}\, .
\end{eqnarray*}
Since $\{G_n\}_{n\in\naturals}$ are $k$-regular,
the probability $d(I,L) \leq 2 r$ vanishes as $n\to\infty$.
It therefore suffices to show that
\[
\lim_{r \to \infty} \lim_{n \to \infty} \E_{U_n}\Big\{\Cov_{n,+}(f_U,f_V) ; d(U,V) > 2 r \Big\}\ = 0\, .
\]
Define
\begin{eqnarray*}
\hf_{i}^+(r)(\ux) = \E_{n,+}\{f(\ux_{\Ball_i(\ell)})|
x_{V_n\setminus\Ball_i(r)}\}\,,
\end{eqnarray*}
the conditional expectation being taken with respect to $\mu_{n,+}$.
Then we have for all $i,j$ that
\begin{eqnarray*}
%
\ind(d(i,j) > 2 r)\, \Cov_{n,+}(f_i,f_j) = \ind(d(i,j) > 2 r)\, \Cov_{n,+}(\hf_{i}^+(r),f_j) \le
\sqrt{\Var_{n,+}(\hf_{i}^+(r))}\,
\end{eqnarray*}
and therefore
\begin{eqnarray}
\lim_{r \to \infty} \lim_{n \to \infty} \E_{U_n}\Big\{\Cov_{n,+}(f_I,f_L) ; d(I,L) > 2 r \Big\}\ &\leq&
\lim_{r \to \infty} \lim_{n \to \infty} \E_{U_n} \sqrt{\Var_{n,+}(\hf_{I}^+(r))} \\
& \leq &
\lim_{r \to \infty} \lim_{n \to \infty} \sqrt{\E_{U_n} \Var_{n,+}(\hf_{U}^+(r))}\, .
\end{eqnarray}
Define the modified function
\begin{eqnarray}
\hf_{i}(r)(\ux) = \E_{n}\{f(\ux_{\Ball_i(\ell)})|
x_{V_n\setminus\Ball_i(r)}\}\,,
\end{eqnarray}
where the expectation is taken with respect to the measure
$\mu_n$. Since the latter is a Gibbs measure $\hf_{i}(r)$
depends on $\ux$ only through the variables $x_j$, $j\in\Ball_i(r)
\setminus\Ball_i(r-1)$. Further $\hf_{i}^+(r)$ and $\hf_{i}(r)$ differ only if
$|\sum_{j\in V_n\setminus\Ball_i(r)} x_j|\le |\Ball_i(r)|$.
Therefore
\begin{eqnarray*}
\Var_{n,+}(\hf_{I}^+(r)) & \le &2\Var_{n,+}(\hf_{I}(r))
+2\Var_{n,+}(\hf_{I}^+(r)-\hf_{I}(r))\\
&\le &2\Var_{n,+}(\hf_{I}(r))
+8\mu_{n,+}\Big(\Big|\sum_{j\in V_n\setminus\Ball_I(r)} x_j\Big|\le
|\Ball_I(r)|\Big)
\, .
\end{eqnarray*}
The last term vanishes as $n\to\infty$ by Lemma \ref{lemma:CLT}.

We are therefore left with the task of showing that
$\lim_{r \to \infty} \lim_{n\to \infty}\E_{U_n}\Var_{n,+}(\hf_{I}(r)) = 0$.
For a function $f : \{-1,1\}^{\Tree_k(\ell)} \to [-1,1]$, let
\[
\barf(r)(\ux) = \E_{\nu_+}\{f(\ux_{\Tree_k(\ell)})|\ux_{\Tree_k\setminus \Tree_k(r)}\}.
\]
For all functions whose domain is not  $\{-1,1\}^{\Tree_k(\ell)}$
we let $\barf(r) = 0$ by convention.
Also, with an abuse of notation, we define
$\barf_{i}(r) = \bar g(r)$ for $g = \hf_i$.
Since  $\hf_{I}(r)$ depends on $\ux$ only through
$\ux_{\Ball_I(r)}$, we obtain by Theorem~\ref{thm:Main} for every $\ve > 0$ that
\[
\lim_{n \to \infty} \E_{U_n}|\Var_{n,+}(\hf_{I}(r)) - \Var_{\nu_+}(\barf_I(r))| \leq 2 \ve +
\lim_{n\to\infty} U_n \left(d_{\sTV}\left(\prob_n^t(I), \delta_{\Tree_k(t)} \times \nu^t_+ \right) > \ve \right) = 2 \ve\, ,
\]
and therefore
\[
\lim_{r \to \infty} \lim_{n \to \infty} \E_{U_n} \Var_{n,+}(\hf_{I}(r)) \leq
\lim_{r \to \infty} \sup \left\{ \Var_{\nu_+}(\barf(r)) \,\,\mid \,\,\ f : \{-1,1\}^{\Tree_k(\ell)} \to [-1,1] \right\}.
\]
By extremality of $\nu_+$, for each $f : \{-1,1\}^{\Tree_k(\ell)} \to [-1,1]$,
$\barf(r)$ converges to an almost sure constant as $r\to\infty$ and
since $f$ is bounded,
$\lim_{r\to\infty}\Var_{\nu_+}(\barf(r)) = 0$.
For each $r$, the map $f \to \barf(r)$ is a contraction in $L^2$ and therefore
the map $f \to \sqrt{\Var_{\nu_+}(\barf(r))}$ is a Lipchitz map with constant $1$.
Since the set of functions $f : \{-1,1\}^{\Tree_k(\ell)} \to [-1,1]$ is compact in $L_2$ and for each
$f$ we have $\lim_{r\to\infty}\Var_{\nu_+}(\barf(r)) = 0$ we conclude that
\[
\lim_{r \to \infty} \sup \left\{ \Var_{\nu_+}(\barf(r)) \,\,\ | \,\,\ f : \{-1,1\}^{\Tree_k(\ell)} \to [-1,1] \right\} = 0,
\]
as needed.
%
%
\section*{Acknowledgements}

A.M. was partially supported by
 by a Terman fellowship, the NSF CAREER award CCF-0743978
and the NSF grant DMS-0806211.
E.M. was partially supported  by the
NSF CAREER award grant DMS-0548249, by DOD ONR
grant (N0014-07-1-05-06), by ISF grant 1300/08 and by EU grant
PIRG04-GA-2008-239317.

Part of this work was carried out while two of the authors
(A.M. and E.M.) were visiting Microsoft Research.
%
%
\bibliographystyle{amsalpha}

\end{document}